\theoremstyle{thmstyleone}%
\newtheorem{theorem}{Theorem}%  meant for continuous numbers
\newtheorem{lemma}[theorem]{Lemma}%
\newtheorem{proposition}[theorem]{Proposition}%
\theoremstyle{thmstyletwo}%
\newtheorem{remark}{Remark}%
\theoremstyle{thmstylethree}%
\newtheorem{definition}{Definition}%
\def\O{\Omega}
\def\n{\nabla}
\def\n{\nabla}
\def\O{\Omega}
\def\n{\nabla}
\def\<{\langle}
\def\>{\rangle}
\def\n{\nabla}
\def\O{\Omega}
\def\rr{\mathbb{R}}
\def\ve{\varepsilon}
\begin{document}

\title[General monotone formula for homogeneous $k$-Hessian equation in the exterior domain and its applications]{General monotone formula for homogeneous $k$-Hessian equation in the exterior domain and its applications}

%%=============================================================%%
%% GivenName	-> \fnm{Joergen W.}
%% Particle	-> \spfx{van der} -> surname prefix
%% FamilyName	-> \sur{Ploeg}
%% Suffix	-> \sfx{IV}
%% \author*[1,2]{\fnm{Joergen W.} \spfx{van der} \sur{Ploeg}
%%  \sfx{IV}}\email{iauthor@gmail.com}
%%=============================================================%%

\author[1]{\fnm{Jiabin} \sur{Yin}}\email{jiabinyin@126.com}
\equalcont{These authors contributed equally to this work.}

\author*[2,3]{\fnm{Xingjian} \sur{Zhou}}\email{zhouxingjian@stu.xmu.edu.cn}
\equalcont{These authors contributed equally to this work.}

\affil*[1]{\orgdiv{School of Mathematics and Statistics}, \orgname{Xinyang Normal University}, \orgaddress{\city{Xinyang}, \postcode{464000}, \state{Henan}, \country{P.R. China}}}

\affil[2]{\orgdiv{School of Mathematical Sciences}, \orgname{Xiamen University}, \orgaddress{\city{Xiamen}, \postcode{361005}, \state{Fujian}, \country{P.R. China}}}

\affil[3]{\orgdiv{Department of Mathematics}, \orgname{University of Trento}, \orgaddress{\street{via Sommarive 14}, \city{Povo}, \postcode{38123}, \state{Trento}, \country{Italy}}}

%%==================================%%
%% Sample for unstructured abstract %%
%%==================================%%

\abstract{In this paper, we deal with an overdetermined problem for the $k$-Hessian equation ($1\leq k<\frac n2$) in the exterior domain and  prove the corresponding ball characterizations. Since that Weinberger type approach seems to fail to solve the problem, we give a new perspective to solve exterior overdetermined problem by combining two integral identities and geometric inequalities inspired by Brandolini-Nitsch-Salani's results \cite{BNS}. Meanwhile, we establish general monotone formulas to derive geometric inequalities related to $k$-admissible solution $u$ in $\mathbb R^n\setminus\Omega$, where $\Omega$ is smooth, $k$-convex and star-shaped domain, which  constructed by Ma-Zhang\cite{MZ} and Xiao\cite{xiao}.}

\keywords{$k$-Hessian equation, overdetermined problem, geometric inequality, monotone formula}

%%\pacs[JEL Classification]{D8, H51}

%%\pacs[MSC Classification]{35A01, 65L10, 65L12, 65L20, 65L70}

\maketitle

\section{Introduction}\label{sec1}

Over recent decades, level set methods have emerged as a powerful tool for analyzing geometric phenomena. A now classical application lies in establishing geometric inequalities through PDE techniques. In their seminal work, Agostiniani and Mazzieri \cite{AM} investigated the exterior Dirichlet problem for the Laplace equation, thereby proving fundamental inequalities including the Willmore inequality. Subsequently, Fogagnolo, Mazzieri, and Pinamonti \cite{FMP} extended this approach to the $p$-Laplacian equation, deriving the volumetric Minkowski inequality via analysis of its exterior Dirichlet problem. A significant advancement was achieved by Agostiniani, Fogagnolo, and Mazzieri \cite{AFM}, who eliminated the convexity requirement for domains through innovative use of monotonicity formulas along solution level sets.
It is worth highlighting that their approach can be used  to obtain sharp geometric inequalities for hypersurfaces in manifolds with nonnegative Ricci curvature \cite{AFM2020,BFM2024,FM} as well as for static manifolds \cite{AM2017,AMO2022,Miao2024}, for  asymptotically flat manifolds \cite{AMMO,AMO2024,XYZ} and so on.

Recently, Ma-Zhang \cite{MZ}  study the exterior Dirichlet problem for the homogeneous $k$-Hessian equation for $1\leq k< \frac n2$:
\begin{equation}\label{eqn:1.1}
	\left\{
	\begin{aligned}
		S_k(\nabla^2 u)=&0\ \ {\rm in}\ \ \mathbb R^n\setminus\bar{\Omega},\\
		u=&-1\ \ {\rm on} \ \ \partial\Omega,\\
		u(x)\rightarrow& 0\ \ {\rm as}\ \ |x|\rightarrow\infty.
	\end{aligned}\right.
\end{equation}
If $\Omega$ is convex domain and strictly $(k-1)$-convex, they proved that the existence, regularity of  the solution of \eqref{eqn:1.1}. In particular, they get an weighted geometric inequality, for $a\geq\frac{k(n-k-1)}{n-k}$,
$$
\frac{n-k}{n-2k}\int_{\partial\O}|\n u|^{a+1}H_{k-1}\leq\int_{\partial\O}|\n u|^aH_k,
$$ which is a natural generalization of the $k = 1$ case by using almost monotonicity formula. In the meanwhile,  Xiao \cite{xiao} solved the exterior Dirichlet problem for the homogenous $k$-Hessian equation under the assumption condition  $(k-1)$-convex and star-shaped domain. Then, Xiao obtained generalized Minkowski inequality
$$
\int_{\partial\Omega}|\nabla u|^{n-k}H_{k-1}\geq\binom{n-1}{k-1}(\frac{n}{k}-2)^{n-k}
|\mathbb S^{n-1}|.
$$ Ma-Zhang and Xiao both consider the following quantity
\begin{equation}\label{eqn:1.5}
	\int_{\{u=t\}}H_{k-1}|\nabla u|^{b+1}(-u)^{(\frac{n-k}{2k-n})a}.
\end{equation}
Then, they  prove that \eqref{eqn:1.5} is monotonic.

Inspired by  Ma-Zhang and Xiao' idea, we can consider the following quantity, for $a\geq\frac{k(n-k-1)}{n-k}$,
\begin{equation}\label{eqn:1.6}
	F(t)=C_1(t)\int_{\{u=t\}}H_{k}|\nabla u|^{a}+C_2(t)\int_{\{u=t\}}H_{k-1}|\nabla u|^{a+1}.
\end{equation}
Thus, we obtain the  first results of this paper as follows.
\begin{theorem}\label{thm:1.2}
	Assume $1\leq k<\frac n2$. Let $\Omega\subset\mathbb R^n$ be a $k$-convex, star-shaped domain with smooth boundary $\Sigma$ and $u\in C^{1,1}(\mathbb R^n\setminus\Omega)$ be a unique solution of \eqref{eqn:1.1}. Let
	\begin{equation*}
		\begin{aligned}
			C_1(t)=&(-t)^{-\frac{(a-k)(n-k)+k}{n-2k}}C_3+(-t)^{1-\frac{(a-k)(n-k)+k}{n-2k}}C_4\\  C_2(t)=&-\frac{(a-k)(n-k)+k}{(n-2k)(a+1-k)}C_3(-t)^{-\frac{(a-k+1)(n-k)}{n-2k}}-\frac{n-k}{n-2k}C_4(-t)^{1-\frac{(a-k+1)(n-k)}{n-2k}},
		\end{aligned}
	\end{equation*}
	where  $C_1(t)\geq0$ and $C_4,C_3\in\mathbb R$.  Then, for any $a\geq\frac{k(n-k-1)}{n-k}$
	\begin{equation}
		F(t)= C_1(t)\int_{\Sigma_t} H_k |\nabla u|^a+C_2(t)\int_{\Sigma_t} H_{k-1} |\nabla u|^{a+1}
	\end{equation}
	is non-increasing for $t\in[-1,0)$. Moreover,
	\begin{equation}\label{eqn:1.8}
		\begin{aligned}
			F(t)\geq\frac{n-2k}{k(a+1-k)}\binom{n-1}{k-1}(\frac{n}{k}-2)^{a}
			\rho^{ k(n-k-a-1)}|\mathbb S^{n-1}|C_3
		\end{aligned}
	\end{equation}
	for any $t\in[-1,0)$, the equality holds if and only if
	$\Sigma_t$ is a sphere.
\end{theorem}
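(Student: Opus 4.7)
The plan is to differentiate $F(t)$ along level sets and exploit the structure of the $k$-Hessian equation together with Newton--MacLaurin inequalities. First, I would parametrize the exterior region $\mathbb R^n\setminus\bar\Omega$ via the level sets $\Sigma_t=\{u=t\}$, which move with normal velocity $1/|\nabla u|$ in the direction $\nu=\nabla u/|\nabla u|$. Using the standard evolution identity
\[
\frac{d}{dt}\int_{\Sigma_t}\phi\,d\sigma
= \int_{\Sigma_t}\frac{1}{|\nabla u|}\bigl(\partial_\nu\phi + H\phi\bigr)\,d\sigma,
\]
where $H=(n-1)H_1$ is the scalar mean curvature of $\Sigma_t$, applied separately to $\phi=H_k|\nabla u|^a$ and $\phi=H_{k-1}|\nabla u|^{a+1}$, I would produce explicit expressions for the derivatives of the two pieces of $F(t)$. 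Since the principal curvatures of $\Sigma_t$ are the eigenvalues of the tangential part of $\nabla^2 u$ divided by $|\nabla u|$, this recasts $\partial_\nu H_{k-1}$ and $\partial_\nu H_k$ in terms of entries of $\nabla^2 u$ and $\nabla u$.

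Second, I would invoke two structural facts about the $k$-Hessian: the Newton tensor $T_{k-1}^{ij}=\partial S_k/\partial u_{ij}$ is divergence-free, and Euler's identity combined with $S_k(\nabla^2 u)=0$ yields $T_{k-1}^{ij}u_{ij}=0$. Pairing these with the divergence theorem on $\{-1<u<t\}$ applied to vector fields of the form $T_{k-1}^{ij}\nabla_j(|\nabla u|^p(-u)^q)$ for appropriate $p,q$ converts the boundary contribution on $\Sigma_t$ into a boundary contribution on $\partial\Omega$ plus an interior integral that, after the co-area formula, is absorbed into the time-derivative machinery. This is the mechanism through which the ambient PDE constrains and links the two level-set integrals.

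The coefficients $C_1(t),C_2(t)$ are then dictated by requiring that in $F'(t)$ every term of indefinite sign cancels and the remainder is proportional to $H_{k-1}H_{k+1}-H_k^2\le 0$, which is the Newton--MacLaurin inequality. Matching $(-t)$-exponents, this cancellation produces precisely the ODE system in $t$ whose solutions are the linear combinations of $(-t)^{-((a-k)(n-k)+k)/(n-2k)}$ and $(-t)^{1-((a-k)(n-k)+k)/(n-2k)}$ appearing in the statement. I expect this bookkeeping---together with verifying that the assumption $a\ge k(n-k-1)/(n-k)$ is exactly what yields the correct sign on the residual term---to be the main computational obstacle.

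For the lower bound \eqref{eqn:1.8}, I would take the limit $t\to 0^-$ and use the known asymptotic behavior of a $k$-admissible solution at infinity, $u(x)\sim -(\rho/|x|)^{(n-2k)/k}$, together with the corresponding expansions of $|\nabla u|$, $H_k$ and $H_{k-1}$ on large coordinate spheres. Evaluating $F(t)$ in this limit with the explicit form of $C_1,C_2$ reads off the constant on the right-hand side of \eqref{eqn:1.8}, and the monotonicity established above propagates the bound to every $t\in[-1,0)$. The equality case is a rigidity statement: $F$ constant forces equality in Newton--MacLaurin at every point of each $\Sigma_t$, so each $\Sigma_t$ is totally umbilic; combined with star-shapedness this forces $\Omega$ to be a ball.
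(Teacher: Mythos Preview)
Your overall architecture---write $F(t)$ as a flux of a divergence-free-type vector field built from the Newton tensor, compute the bulk divergence, tune $C_1,C_2$ via an ODE so that the indefinite terms drop out, and then read off the limit at $t\to 0^-$---matches the paper's approach. Two points, however, need to be addressed.

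\textbf{Regularity.} The solution $u$ furnished by the exterior Dirichlet problem is only $C^{1,1}$ (Lemma~\ref{lem:3.1}), so you cannot legitimately differentiate $F(t)$ in $t$, nor apply the divergence theorem to expressions containing third derivatives of $u$. The paper circumvents this by working with the approximating problem $S_k(\nabla^2 u^\varepsilon)=f^\varepsilon$, where $f^\varepsilon=c_{n,k}\varepsilon^2(|x|^2+\varepsilon^2)^{-n/2-1}$; the solutions $u^\varepsilon$ are smooth, the divergence computation is honest, and the extra $f^\varepsilon$-terms are shown to be $O(\varepsilon^2)$ uniformly on $\{t_1<u^\varepsilon<t_2\}$ thanks to the uniform $C^0$--$C^2$ bounds. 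Only after this does one pass to the limit $\varepsilon\to 0$. Your proposal omits this step entirely, and without it the argument is formal.

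\textbf{The sign-definite residual.} The leftover after the ODE cancellation is \emph{not} proportional to $H_{k-1}H_{k+1}-H_k^2$. In the paper the residual splits into two nonnegative pieces: a refined Kato-type term
\[
C_1(u)\Bigl(S_k^{ij}u_{il}u_{lj}-\tfrac{n}{n-k}\,S_k^{ij}\partial_i|\nabla u|\,\partial_j|\nabla u|\Bigr)|\nabla u|^{a-k-1},
\]
whose nonnegativity is the content of Lemma~\ref{lem:A1}, and a complete square
\[
\Bigl(a-\tfrac{k(n-k-1)}{n-k}\Bigr)C_1(u)\,S_k^{ij}\Bigl(\partial_i|\nabla u|-\tfrac{n-k}{n-2k}\tfrac{|\nabla u|}{u}u_i\Bigr)\Bigl(\partial_j|\nabla u|-\tfrac{n-k}{n-2k}\tfrac{|\nabla u|}{u}u_j\Bigr)|\nabla u|^{a-k-1},
\]
which is where the threshold $a\ge k(n-k-1)/(n-k)$ and the hypothesis $C_1\ge 0$ enter. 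Rigidity (umbilicity of $\Sigma_t$) comes from equality in \emph{these} two terms, not from Newton--MacLaurin on the level sets. Your identification of the mechanism is therefore off, and the bookkeeping you anticipate would not close as written.

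Finally, a minor slip: the asymptotic is $u(x)\sim -\rho\,|x|^{2-n/k}$, not $-(\rho/|x|)^{(n-2k)/k}$; this affects the constant you extract in the limit.
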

As a consequence, we obtain the following geometric inequalities.
\begin{theorem}\label{thm:1.3}
	Assume $1\leq k<\frac n2$. Let $\Omega\subset\mathbb R^n$ be a $k$-convex, star-shaped domain with smooth boundary $\Sigma$ and $u\in C^{1,1}(\mathbb R^n\setminus\Omega)$ be a unique solution of \eqref{eqn:1.1}.  Then the following inequalities hold, for $a\geq\frac{k(n-k-1)}{n-k}$:
	\begin{equation}\label{eqn:1.9}
		\frac{n-k}{n-2k}\int_{\Sigma}|\n u|^{a+1}H_{k-1}\leq\int_{\Sigma}|\n u|^aH_k.
	\end{equation}
	\begin{equation}\label{eqn:1.10}
		\int_{\Sigma}|\nabla u|^{n-k}H_{k-1}\geq\binom{n-1}{k-1}(\frac{n}{k}-2)^{n-k}
			|\mathbb S^{n-1}|.
	\end{equation}
	Moreover, equality in each of the above inequalities holds  if and only if
	$\Omega$ is a ball.
\end{theorem}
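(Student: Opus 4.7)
The plan is to obtain both inequalities as corollaries of Theorem~\ref{thm:1.2}, by specializing the free parameters $C_3,C_4$ (and, for \eqref{eqn:1.10}, the exponent $a$) so that the monotone functional $F$ and its lower bound \eqref{eqn:1.8} collapse onto the desired boundary integrals.

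For \eqref{eqn:1.9} I would set $C_3=0$ and $C_4=1$. Then $C_1(t)=(-t)^{1-[(a-k)(n-k)+k]/(n-2k)}\ge 0$ on $[-1,0)$ automatically, and the right-hand side of \eqref{eqn:1.8} degenerates to $0$, so that $F(t)\ge 0$ for every $t\in[-1,0)$. Evaluating at $t=-1$ gives $C_1(-1)=1$ and $C_2(-1)=-(n-k)/(n-2k)$, whence
\[ F(-1)=\int_{\Sigma}H_{k}|\nabla u|^{a}-\frac{n-k}{n-2k}\int_{\Sigma}H_{k-1}|\nabla u|^{a+1}\ge 0, \]
which is exactly \eqref{eqn:1.9}.

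For \eqref{eqn:1.10} I would take $a=n-k-1$, which is admissible because $k<n/2$ ensures $n-k\ge k$, and couple $C_4=-C_3$ with $C_3>0$. The coupling is chosen so that $C_1(-1)=C_3+C_4=0$, making the $H_k$-piece of $F(-1)$ disappear. A short simplification should reveal
\[ C_2(-1)=\frac{(n-k)-k}{(n-2k)^{2}}\,C_3=\frac{C_3}{n-2k}, \]
while the factorization $C_1(t)=C_3(-t)^{-[(a-k)(n-k)+k]/(n-2k)}(1+t)\ge 0$ on $[-1,0)$ shows that this choice is admissible in Theorem~\ref{thm:1.2}. Since the exponent $k(n-k-a-1)$ vanishes when $a=n-k-1$, the lower bound \eqref{eqn:1.8} at $t=-1$ reads
\[ \frac{C_3}{n-2k}\int_{\Sigma}H_{k-1}|\nabla u|^{n-k}=F(-1)\ge \frac{C_3}{k}\binom{n-1}{k-1}\Bigl(\frac{n}{k}-2\Bigr)^{n-k-1}|\mathbb{S}^{n-1}|. \]
Multiplying through by $(n-2k)/C_3$ and using $(n-2k)/k=n/k-2$ absorbs the missing factor and produces \eqref{eqn:1.10}.

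The rigidity in both inequalities should follow from the rigidity clause of Theorem~\ref{thm:1.2}: equality forces $F$ to saturate its lower bound \eqref{eqn:1.8}, hence $\Omega$ must be a ball; conversely, a direct computation on the radial profile $u(r)=-R^{(n-2k)/k}r^{(2k-n)/k}$ on $\mathbb{R}^n\setminus\overline{B_R}$ verifies equality in both \eqref{eqn:1.9} and \eqref{eqn:1.10}. The only subtle step in this outline is the algebraic collapse of $C_2(-1)$ under $C_3+C_4=0$ with $a=n-k-1$—which boils down to the cancellation $(n-k)(n-2k)-(n-k)(n-2k-1)-k=n-2k$—together with the bookkeeping of the $(n/k-2)$-factor at the end; both are routine but require some care.
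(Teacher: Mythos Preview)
Your derivation of \eqref{eqn:1.9} is exactly the paper's: both set $C_3=0$, $C_4=1$ and read off $F(-1)\ge 0$.

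For \eqref{eqn:1.10} your argument is correct but differs from what the paper writes. The paper states that one should take $C_3=1$, $C_4=0$, $a=n-k-1$ in \eqref{eqn:1.8}. With that choice, however, $C_1(-1)=1$ and $C_2(-1)=-\frac{n-k-1}{n-2k}$, so \eqref{eqn:1.8} at $t=-1$ reads
\[
\int_{\Sigma}H_k|\nabla u|^{n-k-1}-\frac{n-k-1}{n-2k}\int_{\Sigma}H_{k-1}|\nabla u|^{n-k}\ \ge\ \frac{1}{k}\binom{n-1}{k-1}\Bigl(\frac{n}{k}-2\Bigr)^{n-k-1}|\mathbb S^{n-1}|,
\]
which is a mixed inequality, not \eqref{eqn:1.10} directly; an extra step (or a different pair $(C_3,C_4)$) is needed to isolate the $H_{k-1}$ integral. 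Your coupling $C_4=-C_3$ with $C_3>0$ is precisely such a choice: it forces $C_1(-1)=0$, keeps $C_1(t)=C_3(-t)^{-\alpha}(1+t)\ge 0$ on $[-1,0)$, and your computation $C_2(-1)=\frac{C_3}{n-2k}$ is correct. The resulting inequality, after multiplying by $(n-2k)/C_3$ and absorbing the factor $\frac{n-2k}{k}=\frac{n}{k}-2$, is exactly \eqref{eqn:1.10}. So your route is a cleaner specialization of Theorem~\ref{thm:1.2} for the second inequality; it avoids the $H_k$ contamination that the paper's stated parameter choice produces. The rigidity discussion is fine and in line with the paper's.
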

\begin{remark}
	Let $a=n-k-1$, from \eqref{eqn:1.9} and \eqref{eqn:1.10}, we have
	$$
	\int_{\Sigma}|\n u|^{n-k-1}H_k\geq\binom{n-1}{k-1}(\frac{n}{k}-2)^{n-k-1}\frac{n-k}{k}|\mathbb S^{n-1}|.
	$$
\end{remark}

Finally, we consider the study of symmetry in overdetermined boundary value problems. The main technique
to tackle such problems are the celebrated method of moving planes developed by Alexandrov \cite{Al,Al1} and Serrin \cite{Se} as well as  Weinberger's approach \cite{Wei} which is based on maximum principle for so-called $P$-function and Pohozaev's integral identity.

There are plenty of considerations for different kinds of overdetermined  boundary value problems. For our purpose, we recall a result of Reichel \cite{R}, who considered an overdetermined problem for capacity in an exterior domain. The capacity of a smooth bounded domain $\O\subset \rr^n (n\ge 2)$ is defined as
$${\rm Cap}(\O)={\rm inf}\Big\{
\int_{\mathbb R^n}|\nabla v|^2dx\,\Big|\, v\in C_{c}^{\infty}(\mathbb R^n),\, v\geq1\ \ {\rm on}\ \ \Omega\Big\}.
$$
The minimizer for ${\rm Cap}(\O)$ is characterized by the capacitary potential $u$ satisfying
\begin{equation}\label{eqn:1.1'}
	\left\{
	\begin{aligned}
		\Delta u=&0\ \ {\rm in}\ \ \mathbb R^n\setminus\bar{\Omega}\\
		u=&1\ \ {\rm on} \ \ \partial\Omega\\
		u\rightarrow& 0\ \ {\rm as}\ \ |x|\rightarrow\infty,
	\end{aligned}\right.
\end{equation}
Reichel \cite{R}  considered the problem \eqref{eqn:1.1'} with an extra boundary condition
\begin{equation}\label{eqn:1.2'}
	|\nabla u|=c  \ \ {\rm on} \ \ \partial\Omega,
\end{equation}
and proved that \eqref{eqn:1.1'} and \eqref{eqn:1.2'} admits a solution if and only if $\O$ is a ball.
Reichel's proof is again based on the method of moving planes and he also extended  in \cite{R1} such result to more general equations involving $p$-capacity in an exterior domain.
Garofalo-Sartori \cite{GS} and Poggesi \cite{P} reproved Reichel's result for $p$-capacity by using Weinberger type approach.

We will study the problem \eqref{eqn:1.1} with the overdetermined condition
\begin{equation}\label{eqn:1.2}
	|\nabla u|=c\ \ {\rm on} \ \ \partial\Omega,
\end{equation}
for some constant $c>0$. However, Weinberger type approach seems to fail to solve the  problem \eqref{eqn:1.1} and \eqref{eqn:1.2} for $k\geq2$. Inspired by Brandolini-Nitsch-Salani's results \cite{BNS}, we adopt new method to solve it and  obtain the following results  when the domain is assumed to be convex, by using geometric inequalities.
\begin{theorem}\label{thm:1.1}
	Let $1\le k<\frac n2$ and $\Omega\subset\mathbb R^n$ be a smooth bounded convex domain. Then \eqref{eqn:1.1} and \eqref{eqn:1.2} admits a  solution if and only if $\Omega$ is a  ball.
\end{theorem}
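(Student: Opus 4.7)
Proof proposal. The ``if'' direction is a direct verification: on $\Omega = B_\rho(0)$, the radial function $u(x) = -(\rho/|x|)^{(n-2k)/k}$ solves \eqref{eqn:1.1}, and a short computation gives $|\nabla u|\equiv (n-2k)/(k\rho)$ on $\partial B_\rho$, confirming the overdetermined condition with $c = (n-2k)/(k\rho)$.

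For the ``only if'' direction, following the strategy announced in the abstract, the plan is to combine the two geometric inequalities of Theorem \ref{thm:1.3} with two integral identities to rigidify the configuration. Under the overdetermined assumption $|\nabla u|\equiv c$ on $\Sigma$, the inequalities \eqref{eqn:1.9} and \eqref{eqn:1.10} collapse to
\begin{equation*}
  \frac{n-k}{n-2k}\,c\int_\Sigma H_{k-1}\,dS \;\le\; \int_\Sigma H_k\,dS \qquad\text{and}\qquad c^{\,n-k}\int_\Sigma H_{k-1}\,dS \;\ge\; \binom{n-1}{k-1}\Big(\frac{n}{k}-2\Big)^{n-k}|\mathbb{S}^{n-1}|,
\end{equation*}
and by Theorem \ref{thm:1.3} either equality individually characterizes the ball of radius $\rho = (n-2k)/(kc)$. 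The goal is then to produce enough further information to force equality in at least one of these two displays.

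The first integral identity I would invoke is the classical Hsiung-Minkowski formula coupling $\int_\Sigma H_k \langle x,\nu\rangle\, dS$ with $\int_\Sigma H_{k-1}\, dS$, valid on any closed star-shaped hypersurface in $\mathbb{R}^n$. The second is a Pohozaev-type identity obtained by multiplying $S_k(\nabla^2 u) = 0$ by $\langle x,\nabla u\rangle$, integrating over $B_R\setminus\bar{\Omega}$, using the divergence-free property of the Newton tensor $T_{k-1}^{ij} = \partial S_k/\partial u_{ij}$ to reduce the bulk integral to boundary terms on $\Sigma$ and $\partial B_R$, and then passing $R\to\infty$ via the sharp $|x|\to\infty$ asymptotics of $u$ established in \cite{MZ,xiao}. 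The resulting relation should couple $\int_\Sigma |\nabla u|^{k+1} H_{k-1}\langle x,\nu\rangle\, dS$ to an explicit constant depending only on $n$, $k$, and the asymptotic radius encoded in $u$. With $|\nabla u|\equiv c$, combining these two identities with the two displayed inequalities pins down $c$ and the integrals $\int_\Sigma H_{k-1}\, dS$, $\int_\Sigma H_k\, dS$ to their ball values, and the rigidity statement in Theorem \ref{thm:1.3} delivers $\Omega = B_\rho$.

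The main obstacle will be identifying the Pohozaev identity with the correct weight $|\nabla u|^{k+1}$ and constants, and controlling the boundary contribution at infinity, which may require $C^1$-asymptotics beyond what is explicitly recorded in \cite{MZ,xiao}. Star-shapedness of $\Omega$ is essential to ensure $\langle x,\nu\rangle>0$ so that the Minkowski identity has a sign compatible with \eqref{eqn:1.9}, while the convexity hypothesis provides the regularity needed for the integrations by parts and guarantees the admissibility of the $k$-Hessian operator along $\Sigma$.
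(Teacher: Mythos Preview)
Your overall strategy---combine integral identities with the weighted inequalities of Theorem~\ref{thm:1.3}---is indeed what the paper does, but the execution you sketch has a real gap, and you misidentify the role of convexity.

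First, the Pohozaev identity you anticipate does \emph{not} reduce to a pure boundary relation: integrating $S_k(\nabla^2 u)\langle x,\nabla u\rangle$ by parts leaves a bulk term of the form $\int_{\mathbb{R}^n\setminus\bar\Omega}S_{k-1}(\nabla^2 u)|\nabla u|^2\,dx$ that does not vanish and cannot be evaluated from the asymptotics alone. The paper therefore proves \emph{two} integral identities (Lemmas~\ref{lem:3.3} and~\ref{lem:3.4}), the second being a genuine Rellich--Pohozaev identity and the first a separate divergence computation, precisely so that this bulk integral can be eliminated. What survives after elimination is the explicit value
\[
c=\frac{n-2k}{k}\cdot\frac{k-1}{n-k+1}\cdot\frac{\int_\Sigma H_{k-1}}{\int_\Sigma H_{k-2}}\qquad(k\ge 2),
\]
which involves $H_{k-2}$, not the asymptotic radius $\rho$ or the constant in \eqref{eqn:1.10}. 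Your plan to use \eqref{eqn:1.10} and the Hsiung--Minkowski formula in place of this step does not close: \eqref{eqn:1.10} is a one-sided bound whose right-hand side is a fixed universal constant, and there is no companion identity forcing equality there.

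Second---and this is the decisive point---once the display above is fed into \eqref{eqn:1.9}, one obtains
\[
(n-k)(k-1)\Bigl(\int_\Sigma H_{k-1}\Bigr)^{2}\;\le\;(n-k+1)k\int_\Sigma H_k\int_\Sigma H_{k-2},
\]
and the paper closes the argument by invoking the \emph{Aleksandrov--Fenchel inequality}, which gives the reverse inequality and hence equality. For $k=1$ the same mechanism uses Qiu--Xia's Heintze--Karcher--type bound $|\Omega|\int_\Sigma H\le \frac{n-1}{n}|\partial\Omega|^2$. These Aleksandrov--Fenchel/Qiu--Xia inequalities are the \emph{only} place where full convexity of $\Omega$ is used (see Remark~2); star-shapedness and $k$-convexity already suffice for the integrations by parts, the admissibility of the solution, and Theorem~\ref{thm:1.3}. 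Your proposal omits this ingredient entirely and attributes convexity to a regularity role it does not play, so as written the argument cannot force the rigidity conclusion.
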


\begin{remark}~
\begin{itemize}
\item [(1)]	We  adopt some integral identities and inequality \eqref{eqn:1.9} ( Ma-Zhang's inequality of \cite{MZ}) to prove theorem \ref{thm:1.1}. Also, we used some special Aleksandrov-Fenchel inequalities which will hold under the assumption $\Omega$ is smooth bounded convex. 
\item [(2)] If one could get these special Aleksandrov-Fenchel inequalities under $k-$convex  and star-shaped assumption, then the convex condition of theorem \ref{thm:1.1} can be replaced by $k-$convex and star-shaped.
\item [(3)] In the Euclidean setting, we generalize Brandolini-Nitsch-Salani's $k=1$ results \cite{BNS} to the range $1\leq k<n/2$.
\end{itemize}
\end{remark}

\section{Preliminaries}\label{sec2}

\subsection{Symmetric function}
In this section, we review fundamental concepts and properties for the $k$-Hessian
operators. For the proof of the facts below, we refer to \cite{BNST,Garding,Guan-Li,Reilly}.

Throughout the paper we adopt the Einstein summation convention for repeated indices.

For $k\in \{1,\ldots, n\}$, the $k$-th elementary
symmetric function of $\lambda=(\lambda_1,\ldots,\lambda_n\in\mathbb R^n$ is defined by
$$
S_k(\lambda):=\sum_{1\leq i_{1}<\cdots<i_k\leq n}\lambda_{i_1}\cdots\lambda_{i_k}.
$$
Given  a real symmetric $n\times n$ matrix $A=(a_{ij})$ with eigenvalues $\lambda(A)$, we can define the $k$-th elementary symmetric polynomial by $S_k(A):=S_k(\lambda(A))$. Thus
$$
S_k(A)=\frac1{k!}\delta_{j_1,\cdots, j_k}^{i_1,\cdots, i_k}a_{{i_1}{j_1}}\cdots a_{i_kj_k},
$$
where $\delta_{j_1,\cdots, j_k}^{i_1,\cdots,i_k}$ is the generalized Kronecker symbol defined by
\begin{equation*}
	\delta_{j_1,\cdots, j_k}^{i_1,\cdots,i_k}:=
	\left\{
	\begin{aligned}
		&1,\ \  \ \ \ {\rm if}\ (i_1,\cdots,i_k)\ \ {\rm is \ an \ even \ permutation\ of} \  (i_1,\cdots,i_k), \\
		&-1,\ \  \ \ \ {\rm if}\ (j_1,\cdots,j_k)\ \ {\rm is \ an \ odd \ permutation\ of} \  (j_1,\cdots,j_k), \\
		&0,\ \  \ \ \ {\rm otherwise}.
	\end{aligned} \right.
\end{equation*}
We use the convention that $S_0= 1$ and $S_k=0$ for $k>n$.
Denote
$$
S_{k}^{ij}(A):=\frac{\partial S_k(A)}{\partial a_{ij}}=\frac1{(k-1)!}\delta_{j_1,\cdots, j_{k-1},j}^{i_1,\cdots,i_{k-1},i}a_{i_1j_1}\cdots a_{i_{k-1}j_{k-1}}.
$$

The following properties of $S_k(A)$ can be found in \cite{Reilly}, see also \cite{DGX1} for non-symmetric matrices.
\begin{proposition}\label{prop:2.01}
	For any $n\times n$ matrix $A$, we have
	\begin{equation}\label{eqn:2.01}
		\left\{
		\begin{aligned}
			S_k^{ij}(A)=&S_{k-1}(A)\delta_{ij}-\sum_{l=1}^nS_{k-1}^{il}(A)a_{jl},\\
			S_k^{ij}(A)h_{i}^{k}h_{k}^{j}=&S_1(A)S_k(A)-(k+1)S_{k+1}(A),\\
			S_{k}^{ij}(A)\delta_{i}^{j}=&(n-k+1)S_{k-1}(A).
		\end{aligned}\right.
	\end{equation}
\end{proposition}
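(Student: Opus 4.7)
The plan is to derive all three identities directly from the combinatorial definition
$$S_k(A)=\frac{1}{k!}\,\delta_{j_1,\cdots,j_k}^{i_1,\cdots,i_k}\,a_{i_1j_1}\cdots a_{i_kj_k},$$
without diagonalizing $A$, so that the argument covers the non-symmetric case referenced in \cite{DGX1}. The only combinatorial inputs I need are the trace contraction
$$\sum_{i=1}^{n}\delta_{j_1,\cdots,j_{k-1},i}^{i_1,\cdots,i_{k-1},i}=(n-k+1)\,\delta_{j_1,\cdots,j_{k-1}}^{i_1,\cdots,i_{k-1}},$$
which follows because the repeated index $i$ may range over the $n-(k-1)$ values not already used among $i_1,\ldots,i_{k-1}$, together with the Laplace expansion of the generalized Kronecker symbol along its last column.

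The third identity falls out first: contracting $i$ with $j$ in the given formula for $S_k^{ij}(A)$, invoking the trace contraction, and collecting the factorial prefactors produces $(n-k+1)\,S_{k-1}(A)$. For the first identity I would substitute the Laplace expansion
$$\delta_{j_1,\cdots,j_{k-1},j}^{i_1,\cdots,i_{k-1},i}=\delta^{i}_{j}\,\delta_{j_1,\cdots,j_{k-1}}^{i_1,\cdots,i_{k-1}}+\sum_{s=1}^{k-1}(-1)^{k-s}\,\delta^{i_s}_{j}\,\delta_{j_1,\cdots,j_{k-1}}^{i_1,\cdots,\widehat{i_s},\cdots,i_{k-1},i}$$
into $S_k^{ij}(A)$. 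The leading summand immediately contributes $\delta^{i}_{j}\,S_{k-1}(A)$; each of the remaining $k-1$ summands, after using $\delta^{i_s}_{j}$ to identify $i_s$ with $j$ and using the antisymmetry of the Kronecker symbol to move $j_s$ into the last lower slot, becomes equal to $-(k-2)!\,a_{jl}\,S_{k-1}^{il}(A)$ after renaming the dummy $j_s$ as $l$. Summing the $k-1$ copies and dividing by $(k-1)!$ yields the recursion.

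For the second identity, which I read as $S_k^{ij}(A)\,a_{il}\,a_{lj}=S_1(A)\,S_k(A)-(k+1)\,S_{k+1}(A)$ (the classical Newton-type relation), the cleanest approach is to diagonalize $A$ in the symmetric case---both sides are orthogonal-conjugation invariants---and apply the elementary recursion $S_k(\lambda)=\lambda_i\,S_{k-1}(\lambda|i)+S_k(\lambda|i)$ twice to obtain $\lambda_i^2\,S_{k-1}(\lambda|i)=\lambda_i\,S_k(\lambda)-S_{k+1}(\lambda)+S_{k+1}(\lambda|i)$; summing over $i$ and using $\sum_{i}S_{k+1}(\lambda|i)=(n-k-1)\,S_{k+1}(\lambda)$ collects the right-hand side. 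For the non-symmetric case one instead contracts the first identity twice with $a$ and applies the Euler-type relation $a_{ij}\,S_k^{ij}(A)=k\,S_k(A)$.

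The main obstacle will be the sign-and-multiplicity bookkeeping in the first identity: tracking the alternating signs $(-1)^{k-s}$ from the Laplace expansion together with the $(-1)^{k-1-s}$ produced when moving $j_s$ to the last lower slot, and verifying that these combine to give a uniform $-1$ across all $k-1$ contributions, which then assemble---against the factorial normalizations---into the single term $-S_{k-1}^{il}(A)\,a_{jl}$. Once this combinatorial accounting is carried out, the three identities follow routinely from the definitions.
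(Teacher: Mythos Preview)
Your proposal is correct. The paper itself does not prove Proposition~\ref{prop:2.01}; it simply records the identities with a reference to \cite{Reilly} (and to \cite{DGX1} for the non-symmetric case), so there is no ``paper's own proof'' to compare against. Your combinatorial derivation from the generalized Kronecker symbol is a valid self-contained argument and indeed covers non-symmetric $A$, which is what the citation to \cite{DGX1} is meant to justify.

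Two small remarks. First, in your derivation of the first identity the sign bookkeeping you flag as the ``main obstacle'' does work out exactly as you anticipate: the Laplace sign $(-1)^{k-s}$ and the rearrangement sign $(-1)^{k-1-s}$ multiply to $-1$ uniformly in $s$, and the $k-1$ equal contributions of size $(k-2)!/(k-1)!$ assemble to the single term $-S_{k-1}^{il}(A)a_{jl}$. Second, for the Newton identity in the non-symmetric case you do not need to ``contract twice'': writing the first identity at level $k+1$, namely $S_{k+1}^{ij}=S_k\delta_{ij}-S_k^{il}a_{jl}$, and contracting once with $a_{ij}$ already gives
\[
(k+1)S_{k+1}=S_1S_k-S_k^{ij}a_{il}a_{lj},
\]
which is the stated relation (reading $h^{k}_{i}h^{j}_{k}$ as $a_{il}a_{lj}$). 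The diagonalization argument you sketch for the symmetric case is of course also fine and is closer in spirit to Reilly's original treatment.
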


Now we collect some inequalities related to the elementary symmetric functions.
The Garding cone $\Gamma^+_{k}$ is defined as
$$
\Gamma^+_{k}=\{\lambda\in\mathbb R^n|\, S_i>0,\ {\rm for}\ 1\leq i\leq k\}.
$$
We say $A$ belongs to $\Gamma^+_k$ if its eigenvalue $\lambda(A)\in \Gamma^+_k$.
For $A=(a_{ij})\in\Gamma_k$ and $1\leq m\leq l\leq k$,
\begin{equation}
	\left(\frac{S_m(A)}{\binom{n}{m}}\right)^{\frac1m}\geq\left(\frac{S_l(A)}{\binom{n}{l}}\right)^{\frac1l},
\end{equation}
and  the equality holds if and only if $\lambda_1=\lambda_2=\cdots=\lambda_n$.
\begin{proposition}\label{prop:2.1}
	Let $u$ be a $C^{2}$ function on $\mathbb{R}^{n}$, then
	
	\begin{itemize}
		\item [(i)] if $\lambda(\nabla^{2}u)\in \Gamma^+_k$, then $\big(\frac{\partial }{\partial u_{ij}}S_k(\nabla^{2}u)\big)$ is positive definite and $(S_k(\nabla^{2}u))^{\frac1k}$ is concave with respect to $\nabla^{2}u$.
		\item [(ii)] $S_k^{ij}(\nabla^{2}u)$ is divergence free and $S^{ij}_k(\nabla^{2}u)u_{il}=S_k^{il}(\nabla^{2}u)u_{ij}$.
	\end{itemize}
\end{proposition}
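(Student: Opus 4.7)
The plan is to treat the two parts separately, exploiting the classical algebraic structure of the $k$-Hessian operator. For part (i), both claims are invariant under orthogonal change of basis, so I would diagonalize $\nabla^2 u$ at an arbitrary fixed point and reduce everything to statements about the symmetric polynomial $S_k$ in its eigenvalues. For part (ii), I would work directly from the generalized Kronecker representation recorded just above the proposition and exploit its total antisymmetry against the symmetry of second partial derivatives.

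For (i), assume $\nabla^2 u = \mathrm{diag}(\lambda_1,\dots,\lambda_n)$ at the chosen point. Differentiating $S_k(\lambda)$ in $\lambda_i$ gives $\partial S_k/\partial \lambda_i = S_{k-1}(\lambda\,|\,i)$, where $\lambda\,|\,i$ denotes the $(n-1)$-vector obtained by removing the $i$-th coordinate; hence $(\partial S_k/\partial u_{ij})$ is diagonal with these entries. Positive definiteness then follows from the classical fact that on $\Gamma_k^+$ each $S_{k-1}(\lambda\,|\,i)$ is strictly positive, which I would establish by induction on $k$ using the decomposition $S_k(\lambda) = \lambda_i S_{k-1}(\lambda\,|\,i) + S_k(\lambda\,|\,i)$ together with the Newton--Maclaurin inequalities. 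For the concavity of $S_k^{1/k}$, I would invoke G{\aa}rding's inequality for hyperbolic polynomials: since $S_k$ is hyperbolic with cone $\Gamma_k^+$, the $k$-th root is automatically concave on the cone.

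For (ii), divergence-freeness is essentially a one-line computation. Starting from
\[
S_k^{ij}(\nabla^2 u) = \frac{1}{(k-1)!}\,\delta^{i_1\cdots i_{k-1}\,i}_{j_1\cdots j_{k-1}\,j}\, u_{i_1 j_1}\cdots u_{i_{k-1} j_{k-1}},
\]
I would differentiate in $x_j$ and apply Leibniz: every resulting term contains a factor $u_{i_p j_p j}$ symmetric in $(j_p,j)$, while the generalized Kronecker symbol is antisymmetric in that same pair, so each term cancels upon summation. For the commutation identity $S_k^{ij} u_{il} = S_k^{il} u_{ij}$, the cleanest route is to note that at the chosen point $S_k^{ij}(\nabla^2 u)$ is a matrix-polynomial in the symmetric matrix $\nabla^2 u$, so it shares the same eigenbasis and therefore commutes with $\nabla^2 u$; alternatively one can expand via the recursion $S_k^{ij} = S_{k-1}\delta_{ij} - S_{k-1}^{il}u_{jl}$ from \eqref{eqn:2.01} and induct on $k$, propagating the symmetry through the tower.

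I expect the main (and really only) obstacle to be the concavity statement in (i): the positive definiteness and the two identities in (ii) reduce to essentially algebraic manipulations, but the G{\aa}rding-type concavity of $S_k^{1/k}$ on $\Gamma_k^+$ is the genuinely analytic content and is typically either quoted as a black box or proved by a somewhat delicate induction on $k$ via the Newton--Maclaurin machinery recorded in the preliminaries.
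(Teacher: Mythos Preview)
Your proposal is correct. The paper does not actually prove Proposition~\ref{prop:2.1}: it records the statement as a standard fact and, in the opening line of the subsection on symmetric functions, refers the reader to \cite{BNST,Garding,Guan-Li,Reilly} for proofs. Your sketch---diagonalization together with the positivity of $S_{k-1}(\lambda\,|\,i)$ on $\Gamma_k^+$ and G{\aa}rding's concavity theorem for part~(i), and the antisymmetry/symmetry cancellation in the Kronecker representation plus the observation that $(S_k^{ij})$ is a matrix polynomial in $\nabla^2 u$ (hence commutes with it) for part~(ii)---is exactly the content one finds in those references, so there is nothing substantive to compare. One small caveat: the divergence-free computation in (ii) tacitly uses third derivatives of $u$, so strictly speaking it requires $u\in C^3$ rather than the $C^2$ hypothesis stated; this is a wrinkle in the paper's formulation, not in your argument.
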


\vskip 2mm
\subsection{Mean curvatures and Minkowskian integral formula}~

Let $\Omega$ be a bounded connected domain of $\mathbb R^n$ of class $C^2$ having principal
curvatures $\kappa= (\kappa_1,\ldots,\kappa_{n-1})$ and outer unit normal $\nu$. For $k = 1,\ldots, n-1$ we
define the $k$-th curvature of $\partial\Omega$ by
$$
H_k(\partial\Omega)=S_k(\kappa_1,\ldots,\kappa_{n-1}),
$$
moreover we set
$$H_0 = S_0\equiv 1, H_n \equiv 0.$$
For example, $H_1$ is equal to $n-1$ times the mean curvature of $\partial\Omega$, while $H_{n-1}$ is
the Gauss curvature of $\partial\Omega$. In analogy with the case of functions, $\Omega$ is said $k$-convex, with $k\in \{1,\ldots, n-1\}$, if $H_j\geq0$ for $j = 1,\ldots, k$ at every point $x\in\partial\Omega$. We recall here that any sublevel set of a $k$-convex function is $(k-1)$-convex (see \cite{CNS}).

Then, we recall the following identities, known in differential geometry and
in the theory of convex bodies as Minkowskian integral formulae (see \cite{H,S}, for
instance):
\begin{equation}\label{eqn:2.3}
	\int_{\partial\Omega}\langle x,\ \nu\rangle H_{k}=\frac{n-k}{k}\int_{\partial\Omega}H_{k-1}.
\end{equation}

\vskip 2mm
\subsection{Curvatures of level sets and Hessian operators}~

Let $u$ be a $C^2$ function, let $t$ be a regular value of $u$ and let $L = \{u\leq t\}$. If $H_k$
stands for the $k$-th curvature of the level set $\partial L$ at the point $x$, then it is well-known
that
$$
S_1(\nabla^{2}u)=H_1|\nabla u|+\frac{u_iu_ju_{ij}}{|\nabla u|^2},
$$
that means that the value of $\Delta u$ at a point only involves derivatives of $u$ along
the line of steepest descent passing through that point and the mean curvature
$H_1/(n-1)$ of the level surface through the point. More generally, for $1\leq k\leq n$,
a straightforward calculation yields
\begin{equation}\label{eqn:2.4}
	S_k(\nabla^{2}u)=H_k|\nabla u|^k+\frac{S_k^{ij}(\nabla^{2}u)u_iu_lu_{lj}}{|\nabla u|^2},
\end{equation}
Finally, for $1\leq k\leq n$, the following pointwise identity holds (see \cite{Reilly,T})
\begin{equation}\label{eqn:2.5}
	H_{k-1}=\frac{S_{k}^{ij}(\nabla^{2}u)u_iu_j}{|\nabla u|^{k+1}}.
\end{equation}
%

%Finally, we will give an  anisotropic Heintze-Karcher inequality.
%\begin{proposition}[\cite{HLMG, XZ}]\label{prop:2.3}
%Let $\Omega\subset\mathbb R^n$ be an open bounded domain with $ C^{2}$ boundary
%$\partial\O$ satisfying $H_F > 0$. Then,
%\begin{equation}\label{eqn:2.8}
%\frac{n-1}n\int_{\partial\O}\frac{F(\nu)}{H_F}d\sigma\ge   |\Omega|.
%\end{equation}
%and equality holds if and only if $\O$ is a Wulff ball.
%\end{proposition}

\section{The proof of Theorem}\label{sec3}

In this section, we will prove the Theorem \ref{thm:1.2} and \ref{thm:1.3}.

Firstly, We will collect some facts for $k$-Hessian equation \eqref{eqn:1.1}.
\begin{definition}
	For any open set $U\subset\mathbb R^n$ a function $u \in C^{1,1}$ is called $k$-admissible if the eigenvalues $\lambda[\nabla^{2}u(x)]=(\lambda_1(x),\cdots,\lambda_n(x))\in\bar\Gamma_k$.
	
	A $C^2$ regular hypersurface $\mathcal{M}\subset\mathbb R^{n+1}$ is called $k$-convex if its principal curvature vector $\kappa(X)\in\Gamma_k^+$ for all $X\in\mathcal{M}$.
\end{definition}
For $k$-Hessian equation \eqref{eqn:1.1}, Xiao obtained asymptotic behavior of $k$-admissible solutio $u$ as follows.
\begin{lemma}[\cite{xiao}]\label{lem:3.1}
	Let $\Omega$ be a $(k-1)$-convex, star-shaped domain. Then, there exists a  $k$-admissible solution $u\in C^{1,1}(\mathbb R^n\setminus\Omega)$ to equation \eqref{eqn:1.1}, such that $\frac{u(x)}{-|x|^{2-\frac{n}{k}}}\rightarrow\rho$ as
	$|x|\rightarrow\infty$ in $C^2$ topology.
	Here, $\rho>0$ is a constant depending on $\gamma$. Moreover, for any $s \in [-1,0)$, the level set $\{u = s\}$ is regular.
\end{lemma}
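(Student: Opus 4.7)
The plan is to build $u$ by exhausting $\mathbb R^n \setminus \bar\Omega$ by annuli, use an explicit family of radial solutions as barriers, upgrade the barrier bounds to the precise asymptotic profile through a blow-down argument, and then obtain level-set regularity from a Pohozaev-type scalar and the strong maximum principle for the linearized operator.

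\textbf{Existence via exhaustion and radial barriers.} For $R \gg 1$ let $\Omega_R := B_R \setminus \bar\Omega$. Both $\partial B_R$ and $\partial\Omega$ are $(k-1)$-convex (the latter by hypothesis, the former automatically since it is strictly convex), so the standard Dirichlet theory for the $k$-Hessian equation (Caffarelli-Nirenberg-Spruck, Trudinger, Urbas) produces a smooth $k$-admissible solution $u_R$ of $S_k(\nabla^2 u_R)=0$ in $\Omega_R$ with $u_R = -1$ on $\partial\Omega$ and a small negative value on $\partial B_R$. A direct computation with the two eigenvalues of a radial Hessian shows that, for $1 \le k < n/2$, the family $\phi_A(x) := -A|x|^{2 - n/k}$ ($A > 0$) gives exact $k$-admissible solutions of $S_k = 0$ on $\mathbb R^n \setminus \{0\}$; the exponent $2 - n/k$ is in fact the unique one making $S_k$ vanish. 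Since $\Omega$ is star-shaped with respect to the origin, there exist $0 < A_- < A_+$ with $\phi_{A_+} \le -1 \le \phi_{A_-}$ on $\partial\Omega$, and comparison on the admissible cone gives $\phi_{A_+} \le u_R \le \phi_{A_-}$ uniformly in $R$. Combined with interior and boundary $C^{1,1}$ estimates for Hessian equations, this permits passage to the limit $R \to \infty$ and yields a $k$-admissible $u \in C^{1,1}(\mathbb R^n \setminus \Omega)$ solving \eqref{eqn:1.1} and still trapped between $\phi_{A_\pm}$, so in particular $u \to 0$ at infinity.

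\textbf{Asymptotic profile.} Introduce the scale-invariant rescalings $u_\lambda(x) := \lambda^{n/k - 2} u(\lambda x)$; each $u_\lambda$ is a $k$-admissible solution confined between the same radial barriers. As $\lambda \to \infty$ the rescaled inner boundary $\lambda^{-1}\partial\Omega$ collapses to $\{0\}$, so Hessian regularity together with the uniform barrier bounds gives $C^2_{\mathrm{loc}}(\mathbb R^n \setminus \{0\})$ convergence of a subsequence to some $u_\infty$ solving $S_k(\nabla^2 u_\infty) = 0$ with $\phi_{A_+} \le u_\infty \le \phi_{A_-}$. The remaining key point is the classification $u_\infty = -\rho\,|x|^{2 - n/k}$ for a single constant $\rho > 0$ intrinsic to $\Omega$: this is a Liouville-type statement for solutions of $S_k = 0$ with an isolated singularity and matching two-sided growth, obtainable either by an asymptotic moving-planes argument or by invoking the monotonicity of the quantity \eqref{eqn:1.5} along level sets of $u$ due to Ma-Zhang and Xiao, whose equality case characterizes the radial profile. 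Uniqueness of $\rho$ then upgrades the subsequential convergence to full $C^2$ convergence.

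\textbf{Level-set regularity.} Set $P(x) := \langle x, \nabla u(x)\rangle$. Star-shapedness gives $P = \langle x, \nu\rangle |\nabla u| > 0$ on $\partial\Omega$, while the asymptotic profile gives $P \sim \rho(n/k - 2)|x|^{2 - n/k} > 0$ near infinity. A direct computation using the divergence-free identity for $S_k^{ij}$ and the Euler relation $S_k^{ij} u_{ij} = k S_k = 0$ yields $S_k^{ij}(\nabla^2 u)\partial_i \partial_j P = 0$, so the strong maximum principle for the linearized operator rules out interior zeros of $P$. Since on each level set $\{u = s\}$ the gradient $\nabla u$ is normal to the surface, there $P = \langle x, \nu\rangle|\nabla u|$, and $P > 0$ forces $|\nabla u| > 0$, so the level set is a smooth hypersurface. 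The main obstacle is the asymptotic analysis: the soft compactness argument only produces subsequential limits of the form $-A|x|^{2 - n/k}$ with $A \in [A_-, A_+]$, and extracting a single intrinsic constant $\rho$ together with full (rather than merely subsequential) $C^2$ convergence is what genuinely requires the Liouville-type classification or the monotonicity-formula machinery of Ma-Zhang and Xiao.
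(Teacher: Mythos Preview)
The paper does not prove this lemma; it is simply cited from Xiao \cite{xiao} (cf.\ also Ma--Zhang \cite{MZ}), so there is no in-paper proof to compare against. Your sketch is an independent outline of the cited result, and its overall architecture---exhaustion by annuli with radial barriers $\phi_A=-A|x|^{2-n/k}$, blow-down for the asymptotic profile, and the $P$-function $P=\langle x,\nabla u\rangle$ for level-set regularity---is indeed the standard one.

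Two steps, however, are not yet proofs. For level-set regularity you invoke the strong maximum principle for $S_k^{ij}(\nabla^2 u)\,\partial_i\partial_j$, but since $S_k(\nabla^2 u)=0$ the eigenvalues lie on $\partial\Gamma_k^+$ and the matrix $(S_k^{ij})$ is only positive \emph{semi}-definite; moreover $u$ is merely $C^{1,1}$, so the coefficients are only $L^\infty$. Both obstacles are handled in \cite{xiao} by running the argument on the smooth, strictly admissible approximants $u^\varepsilon$ of \eqref{ae} (exactly the device this paper uses in Proposition~\ref{prop:3.1}) and then passing to the limit. For the asymptotic step, you correctly flag that compactness alone yields only subsequential limits $-A|x|^{2-n/k}$ with $A\in[A_-,A_+]$; your appeal to an unspecified Liouville theorem or to the monotonicity of \eqref{eqn:1.5} is a pointer rather than an argument. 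A clean, non-circular way to pin down a unique $\rho$ is to note that the flux $\int_{\{u=t\}}H_{k-1}|\nabla u|^k$ of the divergence-free field $S_k^{ij}u_j$ is independent of $t$ and invariant under your rescaling $u_\lambda$, hence fixes the coefficient of every blow-down limit.
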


\subsection{Monotonicity of $F(t)$}

\begin{proposition}\label{prop:3.1}
	Let $\Omega\subset\mathbb R^n$ be a $k$-convex and star-shaped domain with smooth boundary $\Sigma$.  Let
	\begin{equation*}
		\begin{aligned}
			C_1(t)=&(-t)^{-\frac{(a-k)(n-k)+k}{n-2k}}C_3+(-t)^{1-\frac{(a-k)(n-k)+k}{n-2k}}C_4,\\  C_2(t)=&-\frac{(a-k)(n-k)+k}{(n-2k)(a+1-k)}C_3(-t)^{-\frac{(a-k+1)(n-k)}{n-2k}}\\
			&-\frac{n-k}{n-2k}C_4(-t)^{1-\frac{(a-k+1)(n-k)}{n-2k}},
		\end{aligned}
	\end{equation*}
	where $C_3,C_4\in\mathbb R$ and $C_1(t)\geq0$.  Then, for any $a\geq\frac{k(n-k-1)}{n-k}$
	\begin{equation}
		F(t)= C_1(t)\int_{\Sigma_t} H_k |\nabla u|^a+C_2(t)\int_{\Sigma_t} H_{k-1} |\nabla u|^{a+1}
	\end{equation}
	is non-increasing. Moreover, $F(t)$ is constant if and only if $\Omega$ is a ball.
\end{proposition}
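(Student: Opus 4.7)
The plan is to differentiate $F$ in $t$, use the homogeneous $k$-Hessian equation $S_k(\nabla^2 u)=0$ together with the divergence-free identity $\partial_j S_k^{ij}(\nabla^2 u)=0$ (Proposition~\ref{prop:2.1}(ii)) to integrate by parts along $\Sigma_t$, and bound the surviving integrand by a Newton--MacLaurin type inequality. Since $F$ is linear in the free parameters $C_3$ and $C_4$, it suffices to treat the cases $(C_3,C_4)=(1,0)$ and $(C_3,C_4)=(0,1)$ separately.

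\textbf{Differentiation.} Because $\Sigma_t$ evolves in its outer normal direction $\nu=\nabla u/|\nabla u|$ with speed $1/|\nabla u|$, the first-variation formula gives
$$\frac{d}{dt}\int_{\Sigma_t} g\,dA = \int_{\Sigma_t}\frac{\nabla_\nu g + g\,H_1}{|\nabla u|}\,dA,$$
where $H_1$ is the mean curvature of $\Sigma_t$. Applying this to $g=H_k|\nabla u|^a$ and $g=H_{k-1}|\nabla u|^{a+1}$, combining with the explicit $t$-derivatives of $C_1,C_2$, and rewriting the normal derivatives via the pointwise identities \eqref{eqn:2.4}, \eqref{eqn:2.5} together with $S_k(\nabla^2 u)=0$, one expresses $F'(t)$ as a single integral over $\Sigma_t$ of a polynomial in $u,\nabla u,\nabla^2 u$ and the Newton tensors $S_j^{pq}(\nabla^2 u)$.

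\textbf{Integration by parts and sign.} The core simplification is to organize the tangential gradient terms into divergence form $\partial_j(S_k^{ij}(\cdots))$ and to exploit $\partial_j S_k^{ij}=0$ to kill the non-sign-definite cross terms. What remains is, up to a positive coefficient involving $|\nabla u|^{a-k}$ (where the hypothesis $a\geq k(n-k-1)/(n-k)$ provides the correct sign), a sign-definite quadratic expression in the principal curvatures of $\Sigma_t$ that is non-positive on the Garding cone $\Gamma_k^+$ by Newton--MacLaurin. This yields $F'(t)\leq 0$. The precise form of $C_1(t),C_2(t)$ --- in particular the identity $\beta=\alpha+1$ between the exponents $\alpha=\tfrac{(a-k)(n-k)+k}{n-2k}$ and $\beta=\tfrac{(a-k+1)(n-k)}{n-2k}$ --- is exactly what makes the coefficients produced by the integration by parts match so that every non-sign-definite term cancels; equivalently $C_1,C_2$ solve a coupled first-order linear ODE system whose two-parameter family of solutions is the one displayed.

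\textbf{Rigidity.} If $F'(t_0)=0$, pointwise equality must hold in the Newton--MacLaurin inequality throughout $\Sigma_{t_0}$, so all principal curvatures of $\Sigma_{t_0}$ coincide and $|\nabla u|$ is constant on $\Sigma_{t_0}$; combined with the asymptotic normalization $u(x)/(-|x|^{2-n/k})\to\rho$ from Lemma~\ref{lem:3.1}, this forces every level set to be a concentric round sphere and hence $\Omega$ to be a ball. The main obstacle will be the integration by parts step: identifying the correct divergence-form rearrangement so that $\partial_j S_k^{ij}=0$ annihilates precisely the cross terms and leaves only a Newton--MacLaurin quadratic. The stated $C_1(t),C_2(t)$ are in fact uniquely determined (up to $C_3,C_4$) by this cancellation requirement, and the threshold $a\geq k(n-k-1)/(n-k)$ arises as the sharp condition making the coefficient of the surviving quadratic have the correct sign.
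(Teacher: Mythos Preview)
Your outline captures the correct high-level strategy---choose $C_1,C_2$ to solve a first-order ODE system so that the non-sign-definite cross terms cancel, and control the remainder by a Newton--MacLaurin type inequality---but it has one genuine gap and one point where your description is too vague to be a proof.

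\textbf{The regularity gap.} The solution $u$ furnished by Lemma~\ref{lem:3.1} is only $C^{1,1}$, not $C^2$; the equation $S_k(\nabla^2 u)=0$ is degenerate. Your differentiation step and your use of $\partial_j S_k^{ij}(\nabla^2 u)=0$ both implicitly require third derivatives of $u$ (the normal derivative of $H_k$ along $\Sigma_t$ involves $\nabla^3 u$, and the divergence-free identity is applied to a quantity containing $\nabla^2 u$). These manipulations are not justified for a $C^{1,1}$ function. The paper handles this by passing to the smooth approximating problem $S_k(\nabla^2 u^\varepsilon)=f^\varepsilon$ with $f^\varepsilon=c_{n,k}\varepsilon^2(|x|^2+\varepsilon^2)^{-n/2-1}>0$, writing $F_\varepsilon$ as the flux of an explicit vector field $X_\varepsilon$, and computing $F_\varepsilon(t_2)-F_\varepsilon(t_1)=\int_{\{t_1<u^\varepsilon<t_2\}}\operatorname{div}X_\varepsilon$ via the divergence theorem over the bulk region rather than differentiating in $t$. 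One then tracks that all error terms coming from $f^\varepsilon\neq 0$ and from $\partial_j S_k^{ij}\neq 0$ (since now $S_k^{ij}u_{ijl}=\partial_l f^\varepsilon$) are $O(\varepsilon^2)$, uniformly on $\{t_1\le u^\varepsilon\le t_2\}$, using the uniform $C^0$--$C^2$ estimates for $u^\varepsilon$. Only after establishing $F_\varepsilon(t_2)-F_\varepsilon(t_1)\le O(\varepsilon^2)$ does one let $\varepsilon\to 0$. This approximation is the substance of Step~1 of the paper's proof and cannot be skipped.

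\textbf{The sign-definite remainder.} What survives after the ODE cancellation is not a single Newton--MacLaurin quadratic but two separate nonnegative pieces. One is the completed square
\[
\Big(a-\tfrac{k(n-k-1)}{n-k}\Big)C_1(u)\,S_k^{ij}\Big(\partial_i|\nabla u|-\tfrac{n-k}{n-2k}\tfrac{|\nabla u|}{u}u_i\Big)\Big(\partial_j|\nabla u|-\tfrac{n-k}{n-2k}\tfrac{|\nabla u|}{u}u_j\Big)|\nabla u|^{a-k-1},
\]
which is nonnegative because $a\ge k(n-k-1)/(n-k)$, $C_1\ge 0$, and $(S_k^{ij})$ is positive semidefinite on $\overline{\Gamma_k^+}$. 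The other is
\[
C_1(u)\Big(S_k^{ij}u_{il}u_{lj}-\tfrac{n}{n-k}S_k^{ij}\partial_i|\nabla u|\,\partial_j|\nabla u|\Big)|\nabla u|^{a-k-1},
\]
whose nonnegativity (up to $O(\varepsilon^2)$) is a separate lemma (Lemma~\ref{lem:A1}) proved by computing in an adapted frame on $\Sigma_t$ and invoking the Newton inequality $(k+1)S_{k+1}(\lambda)S_{k-1}(\lambda)\le k\,S_k(\lambda)^2$ for the tangential eigenvalues $\lambda$. Your proposal conflates these two pieces and does not identify the second inequality, which is the analytically nontrivial one. The rigidity statement then follows from equality in both pieces forcing $\Sigma_t$ to be umbilical.
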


\begin{proof}
	{\bf Step 1: Approximating}.
	Since $u$ is only $C^{1,1}$, we need to establish the monotone property of $F(t)$ via approximation.
	So we adopt Ma-Zhang's idea to prove it (see section 6 in \cite{MZ} for details). We consider the following approximating equation
	\begin{equation}\label{ae}
		\left\{
		\begin{aligned}
			S_{k}(u^{\ve}_{ij})&=f^{\ve} \ \ \  {\rm in}\ \mathbb R^n\setminus\Omega,\\
			u^{\ve}&=-1\ \ \ {\rm on}\ \partial\Omega,\\
			u^{\ve}&\rightarrow0 \ {\rm as }\ \ x\rightarrow\infty ,
		\end{aligned}\right.
	\end{equation}
	where $f^{\ve}=c_{n,k}\ve^2(|x|^2+\ve^2)^{-\frac n2-1}$.
	
	On the level set $\Sigma_{t,\ve}=\{u^{\ve}=t\}$, we define $F_{\ve}(t)$ as follows:
	\begin{equation}\label{eqn:A1}
		F_{\ve}(t) = C_{1}(t)\int_{\Sigma_{t, \ve}} H_k |\nabla u^{\ve}|^{a} + C_2(t)\int_{\Sigma_{t, \ve}} H_{k-1} |\nabla u^{\ve}|^{a+1}.
	\end{equation}
	Here $C_1(t)$ and $C_2(t)$ satisfied the system of ODEs
	\begin{equation*}
		\begin{cases}
			C_2'(t)+(a-\frac{k(n-k-1)}{n-k})(\frac{n-k}{(n-2k)t})^2C_1(t)= 0, \\
			C_1'(t)-(a+1-k)C_2(t)+2\frac{n-k}{(n-2k)t}(a-\frac{k(n-k-1)}{n-k})C_1(t)= 0.
		\end{cases}
	\end{equation*}
	
	It is easy to see that
	\begin{equation}\label{eqn:A2}
		H_k=\frac{f^{\ve}}{|\nabla u^{\ve}|^k}-\frac{S_k^{ij}(\nabla^{2}u^{\ve})u_i^{\ve}u_l^{\ve}u^{\ve}_{lj}}{|\nabla u^{\ve}|^{k+2}},
	\end{equation}
	\begin{equation}\label{eqn:A3}
		H_{k-1}=\frac{S_{k}^{ij}(\nabla^{2}u^{\ve})u^{\ve}_iu^{\ve}_j}{|\nabla u^{\ve}|^{k+1}}.
	\end{equation}
	Using \eqref{eqn:A2} and \eqref{eqn:A3}, we can rewrite
	\begin{equation}\label{eqn:A4}
		\begin{aligned}
			F_{\ve}(t) =&\int_{\Sigma_{t, \ve}}C_{1}(u^{\ve}) \left(\frac{f^{\ve}}{|\nabla u^{\ve}|^k}-\frac{S_k^{ij}(\nabla^{2}u^{\ve})u_i^{\ve}u_l^{\ve}u_{lj}^{\ve}}{|\nabla u^{\ve}|^{k+2}}\right)|\nabla u^{\ve}|^{a}\\
			& +\int_{\Sigma_{t, \ve}} C_2(u^{\ve})S_{k}^{ij}(\nabla ^2u^{\ve})u_i^{\ve}u_j^{\ve} |\nabla u^{\ve}|^{a-k}.
		\end{aligned}
	\end{equation}
	
	Let $X_{\ve}=U_{\ve}+V_{\ve}$, where
	\begin{equation*}
		\left\{
		\begin{aligned}
			U_{\ve}=&C_{1}(u^{\ve}) f^{\ve}|\nabla u^{\ve}|^{(a-k-1)}\nabla u^{\ve},\\
			V_{\ve}=&-C_1(u^{\ve})S_k^{ij}(\nabla^{2}u^{\ve})u_l^{\ve}u_{lj}^{\ve}|\nabla u^{\ve}|^{a-k-1}\partial x^{i} \\
			&+C_2(u^{\ve})S_{k}^{ij}(\nabla ^2u^{\ve})u_j^{\ve} |\nabla u^{\ve}|^{a-k+1}\partial x^{i}.
		\end{aligned}\right.
	\end{equation*}
	
	Then
	\begin{equation}\label{eqn:A4}
		F_{\ve}(t)=\int_{\Sigma_{t, \ve}}\left\langle X_{\ve},\, \frac{\n u^{\ve}}{|\n u^{\ve}|}\right\rangle.
	\end{equation}
	%By adapting the proof of \cite[Lemma 1.3]{AMMO},  we have the following lemma.
	%		\begin{lemma}\label{lem:L1}
		%			For  $\ve>0$ small enough, $\Sigma_{t, \ve}=\{u^{\ve}=t\}$, we have
		%			\begin{equation*}
			%				\lim_{\ve\rightarrow 0}F_{\ve}(t)=F(t).
			%			\end{equation*}
		%		\end{lemma}
	Let $t_1 < t_2 $ be two regular value of $u^{\ve}$. One sees from the divergence theorem and \eqref{eqn:A4} that
	\begin{equation}\label{eqn:A5}
		\begin{aligned}
			F_{\ve}(t_2)-F_{\ve}(t_1)=&\int_{\{t_1<u^{\ve}<t_2\}}{\rm div}\ X_{\ve}.
		\end{aligned}
	\end{equation}
	{\bf Step 2: The estimation of \eqref{eqn:A5}}.		
	For simplicity, we use $u$ instead of $u^{\ve}$ and $S_k^{ij}$ instead of $S_k^{ij}(\nabla^{2}u^{\ve})$ during the
	proof.

	It is directing computation that
	\begin{equation*}
		\begin{aligned}
			{\rm div}U_{\ve}=&C_1'(u)f^{\ve}|\nabla u|^{(a-k+1)}+(f^{\ve})'C_{1}(u)|\nabla u|^{(a-k-1)}\langle\nabla u,x\rangle\\
			&+C_{1}(u) f^{\ve}{\rm div}(|\nabla u|^{(a-k-1)}\nabla u),
		\end{aligned}
	\end{equation*}
	\begin{equation*}
		\begin{aligned}
			{\rm div}V_{\ve}=&-\left(C_1'(u)S_k^{ij}u_lu_{lj}u_i+C_1(u)S_k^{ij}u_{il}u_{lj}+C_1(u)S_k^{ij}u_{l}u_{ijl}\right)|\nabla u|^{a-k-1}\\
			&-\left(a-\frac{k(n-k-1)}{n-k}-\frac n{n-k}\right)C_1(u)S_k^{ij}u_{l}u_{lj}u_{si}u_s|\nabla u|^{a-k-3}\\
			&+\left(C_2'(u)S_k^{ij}u_iu_j+C_2(u)S_k^{ij}u_{ij}\right)|\nabla u|^{a-k+1}\\
			&+(a-k+1)C_2(u)S_k^{ij}u_ju_{si}u_s|\nabla u|^{a-k-1}.
		\end{aligned}
	\end{equation*}
	Then, we have
	\begin{equation*}
		\begin{aligned}
			{\rm div}X_{\ve}=&-C_1(u)\left(S_k^{ij}u_{il}u_{lj}-\frac{n}{n-k}{S_{k}^{ij}}\partial_i|\nabla u|\partial_j|\nabla u|\right)|\nabla u|^{a-k-1}\\
			&-(a-\frac{k(n-k-1)}{n-k})C_1(u)S_k^{ij}\left(\partial_i|\nabla u|-\frac{n-k}{n-2k}\frac{|\nabla u|}{u}u_i\right)\\
			&\times\left(\partial_j|\nabla u|-\frac{n-k}{n-2k}\frac{|\nabla u|}{u}u_j\right)|\nabla u|^{a-k-1}\\
			&+\left(C_2'(u)+(a-\frac{k(n-k-1)}{n-k})(\frac{(n-k)}{(n-2k)u})^2C_1(u)\right)S_k^{ij}u_iu_j |\nabla u|^{a-k+1}\\
			&-\left(C_1'(u)-(a+1-k)C_2(u)+2\frac{n-k}{(n-2k)u}(a-\frac{k(n-k-1)}{n-k})C_1(u)\right)\\
			&\times S_k^{ij}\partial_i|\n u|u_j |\nabla u|^{a-k} \\
			& +{\rm div}U_{\ve}-C_1(u)S_k^{ij}u_{l}u_{ijl}|\nabla u|^{a-k-1}+C_2(u)S_k^{ij}u_{ij}|\nabla u|^{a-k+1} \\
			&\leq {\rm div}U_{\ve}-C_1(u)S_k^{ij}u_{l}u_{ijl}|\nabla u|^{a-k-1}+C_2(u)S_k^{ij}u_{ij}|\nabla u|^{a-k+1}\\
&-C_1(u)\left(S_k^{ij}u_{il}u_{lj}-\frac{n}{n-k}{S_{k}^{ij}}\partial_i|\nabla u|\partial_j|\nabla u|\right)|\nabla u|^{a-k-1}.
		\end{aligned}
	\end{equation*}
	Here, we have used $C_1(u)$ and $C_2(u)$ that satisfied ODEs System in Appendix A.

By using the uniform $C^0$-$C^2$ estimates of $u^{\ve}$ (see \cite{MZ}, \cite{xiao}), we have
	\begin{equation*}
		\begin{aligned}
			& \int_{t_1<u^{\ve}<t_2}{\rm div}U_{\ve}-C_1(u)S_k^{ij}u_{l}u_{ijl}|\nabla u|^{a-k-1}+C_2(u)S_k^{ij}u_{ij}|\nabla u|^{a-k+1} \\
			\leq& O(\ve^2)(|t_2|^{\frac {2k}{n-2k}}-|t_1|^{\frac{2k}{n-2k}}),
		\end{aligned}
	\end{equation*}
	where $t_1,t_2\in[-1,0)$. Here, we have used $$
	S_{k}^{ij}u_{ijl}=O(\ve^2)(|x|^2+\ve^2)^{-\frac n2-2}x_l\ \ {\rm for }\ l=1,\ldots,n.
	$$
	Thus, combining with Lemma \ref{lem:A1} in  Appendix A ,  we obtain that
	\begin{equation}\label{eqn:3.9}
		\int_{\{t_1<u^{\ve}<t_2\}}{\rm div}X_{\ve}\leq O(\ve^2)(|t_2|^{\frac {2k}{n-2k}}-|t_1|^{\frac{2k}{n-2k}}),
	\end{equation}
	for $t_1<t_2\in[-1,0)$.
	
	Let $\ve\rightarrow0$, from \eqref{eqn:A5} and Lemma \eqref{eqn:3.9}, it is easy to obtain the assertion. Moreover, if $F(t)$
	is constant,  we can see that $\Sigma_t$ is umbilical hypersurfaces.
\end{proof}

%\begin{corollary}\label{cor:3.1}
%Let $\{u=t_1\}$, $\{u=t_2\}$ be two regular level sets for $t_1< t_2$. Then $F(t_2)\le F(t_1)$.
%		\end{corollary}

\subsection{Asymptotic behavior}
\begin{lemma}\label{asymptoticb}
Let $u$ be a  $k$-admissible $C^{1,1}$ solution of equation \eqref{eqn:1.1}. Along the level set $\Sigma_t=\{u=t\}$, we have that $t\rightarrow 0$
\begin{equation*}
	\begin{aligned}
		|\Sigma_{t}| =& |\mathbb S^{n-1}|(-t\rho^{-1})^{\frac{k(n-1)}{2k-n}} \left( 1 + o(1) \right),
		\\\int_{\Sigma_t} H_k|\nabla u|^{a} =&\binom{n-1}{k-1}(\frac{n}{k}-1)[(\frac{n}{k}-2)\rho]^{a}|\rho|^{\frac{(a-k)(k-n)-k}{n-2k}}|\mathbb S^{n-1}|\times\\
		&\times (-t)^{\frac{(a-k)(k-n)-k}{2k-n}}(1+o(1)),\\
		\int_{\Sigma_t}H_{k-1}|\nabla u|^{a+1}= & \binom{n-1}{k-1}[(\frac{n}{k}-2)\rho]^{a+1}|\rho|^{-\frac{(a+1-k)(n-k)}{n-2k}}|\mathbb S^{n-1}|\times\\
		&\times (-t)^{\frac{(a+1-k)(n-k)}{n-2k}}(1+o(1)).
	\end{aligned}
\end{equation*}
\end{lemma}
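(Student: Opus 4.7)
The plan is to exploit the $C^2$ expansion $u(x)=-\rho|x|^{2-n/k}(1+o(1))$ from Lemma \ref{lem:3.1} and to reduce all three asymptotics to the computation on the model $u_0(x):=-\rho|x|^{2-n/k}$, whose level sets are concentric spheres. On a sphere of radius $R$ one has $|\nabla u_0|=(n/k-2)\rho R^{1-n/k}$, all principal curvatures equal to $1/R$, and hence $H_j=\binom{n-1}{j}R^{-j}$; everything else is bookkeeping of powers of $-t$ and $\rho$.

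First I would invert $u=t$. For $|t|$ small, the implicit function theorem applied to the rescaled function $\tilde u(y,t):=R_t^{n/k-2}u(R_t y)/(-\rho)$, with $R_t:=(-t\rho^{-1})^{k/(2k-n)}$, shows that $\Sigma_t=\{r(\omega,t)\omega:\omega\in\mathbb S^{n-1}\}$ is a radial graph with $r(\omega,t)=R_t(1+o(1))$ uniformly in $\omega$. The key input is that $\tilde u(\cdot,t)\to|y|^{2-n/k}$ in $C^2_{\mathrm{loc}}$ near $|y|=1$, which is a direct consequence of the $C^2$ asymptotic in Lemma \ref{lem:3.1}. This immediately yields $|\Sigma_t|=|\mathbb S^{n-1}|R_t^{n-1}(1+o(1))$ and hence, via $R_t^{n-1}=(-t\rho^{-1})^{k(n-1)/(2k-n)}$, the first claimed formula.

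For the second and third formulas, the same $C^2$ convergence transfers the model asymptotics on the sphere of radius $R_t$ to the true level set $\Sigma_t$, giving uniformly $|\nabla u|=(n/k-2)\rho R_t^{1-n/k}(1+o(1))$ and $H_j=\binom{n-1}{j}R_t^{-j}(1+o(1))$. Multiplying by the area element $dS_t=R_t^{n-1}d\omega(1+o(1))$ and using $\binom{n-1}{k}=(n/k-1)\binom{n-1}{k-1}$ produces
$$\int_{\Sigma_t}H_k|\nabla u|^a=\binom{n-1}{k-1}(n/k-1)[(n/k-2)\rho]^a R_t^{\beta_k}|\mathbb S^{n-1}|(1+o(1)),$$
with $\beta_k=n-1-k+a(1-n/k)$, and analogously for $\int_{\Sigma_t}H_{k-1}|\nabla u|^{a+1}$ with $\beta_{k-1}=n-k+(a+1)(1-n/k)$. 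Substituting $R_t^{\beta}=(-t)^{k\beta/(2k-n)}\rho^{-k\beta/(2k-n)}$ and computing $k\beta_k=(a-k)(k-n)-k$ and $k\beta_{k-1}=-(a+1-k)(n-k)$ matches the stated exponents of $-t$ and $\rho$ exactly.

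The main obstacle is obtaining the curvature asymptotics \emph{uniformly} on $\Sigma_t$: since $H_j$ depends on second derivatives of $u$, the $C^0$ part of Lemma \ref{lem:3.1} alone is insufficient, and one genuinely needs the $C^2$ convergence after the natural rescaling $y=x/R_t$. Once this rescaled $C^2$ smallness of $\tilde u-|y|^{2-n/k}$ is in hand, passing from the model sphere to the level set and assembling the factors is purely algebraic and reduces to the exponent identities recorded above.
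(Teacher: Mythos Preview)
Your proposal is correct and uses essentially the same input as the paper---the $C^2$ asymptotics of $u$ at infinity from Lemma~\ref{lem:3.1}---but organizes the argument differently. The paper first writes out the pointwise expansions of $u$, $u_i$, $u_{ij}$ in powers of $|x|$, feeds them into the level-set identities \eqref{eqn:2.4}--\eqref{eqn:2.5} to obtain $H_k$, $H_{k-1}$, $|\nabla u|$ as functions of $|x|$, and then---for the area $|\Sigma_t|$---invokes a geometric comparison: the mean-convex foliation by level sets (and by coordinate spheres) makes both families area outer-minimizing for $t$ close to $0$, so $|S_{r_-(t)}|\le|\Sigma_t|\le|S_{r_+(t)}|$ with $r_\pm(t)=\min/\max_{\Sigma_t}|x|$. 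You instead rescale by $R_t$ and apply the implicit function theorem to obtain $\Sigma_t$ directly as a radial graph $r(\omega,t)=R_t(1+o(1))$, which yields the area and the curvature asymptotics in one stroke without the outer-minimizing detour. Your route is arguably more self-contained; the paper's route isolates the outer-minimizing observation, which is of independent interest. Both lead to the same exponent bookkeeping, and your identities $k\beta_k=(a-k)(k-n)-k$ and $k\beta_{k-1}=-(a+1-k)(n-k)$ check out.
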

\begin{proof}
From Lemma \ref{lem:3.1} we know that
\begin{equation}\label{asymptotic1}
	u=-\rho |x|^{2-\frac{n}{k}}+o(|x|^{2-\frac{n}{k}}),
\end{equation}
\begin{equation}\label{asymptotic2}
	u_i=-(2-\frac{n}{k})\rho |x|^{-\frac{n}{k}}x_i+o(|x|^{1-\frac{n}{k}}),
\end{equation}
\begin{equation}\label{asymptotic3}
	u_{ij}=(2-\frac{n}{k})\rho |x|^{-\frac{n}{k}-2}(\frac{n}{k}x_ix_j-|x|^2\delta_{ij})+o(|x|^{-\frac{n}{k}}).
\end{equation}
By using \eqref{asymptotic1}-\eqref{asymptotic3}, we can obtain from \eqref{eqn:2.4} and \eqref{eqn:2.5} that
\begin{equation*}\label{asymptotic4}
	\left\{
	\begin{aligned}
		H_k=&-\frac{S_{k}^{ij}u_{im}u_mu_j}{|\nabla u|^{k+2}}=\binom{n-1}{k-1}(\frac{n}{k}-1)|x|^{-k}+o(|x|^{-k}),\\
		H_k|\nabla u|^{a}
		=&\binom{n-1}{k-1}(\frac{n}{k}-1)[(\frac{n}{k}-2)\rho]^{a}|x|^{a(1-\frac{n}{k})-k}+o(|x|^{a(1-\frac{n}{k})-k}),\\
		H_{k-1}|\nabla u|^{a+1}=&{S_{k}^{ij}u_iu_j}{|\nabla u|^{a-k}}\\
		=&\binom{n-1}{k-1}[(\frac{n}{k}-2)\rho]^{a+1}|x|^{(a+1)(1-\frac{n}{k})+1-k}+o(|x|^{(a+1)(1-\frac{n}{k})+1-k}).
	\end{aligned}\right.
\end{equation*}

Hence, as $t$ large enough, $H>0$ along $\Sigma_t$. It follows that $\Sigma_{t}$ is area outer-minimizing in $\mathbb R^n\setminus\Omega$ when $t$ is large enough, since the exterior of $\Sigma_{t}$ in $\mathbb R^n\setminus\Omega$ is foliated by mean-convex surfaces. Similarly, each coordinate sphere $S_r$ is area outer-minimizing in $\mathbb R^n\setminus\Omega$ when $r$ is large enough.
Denote $r_{-}(t) = \min\limits_{\Sigma_t} |x|$ and $r_{+}(t) = \max\limits_{\Sigma_t} |x|$. It follows from the outer-minimizing property that \begin{eqnarray}\label{xeq-limit4}
	|S_{r_{-}(t)}| \le |\Sigma_t| \le |S_{r_{+}(t)}|.
\end{eqnarray}
Note that
\begin{eqnarray}\label{xeq-limit5}
	|S_r|=|\mathbb S^{n-1}| r^{n-1}+o(r^{n-1}).
\end{eqnarray}
As $t\rightarrow0$, It follows from \eqref{asymptotic1} that
\begin{equation}\label{asymptotic5}
	|x|=(-t)^{\frac{k}{2k-n}}\rho^{\frac{k}{n-2k}}(1+o(1)).
\end{equation}
Therefore Lemma \ref{asymptoticb} can be obtained by above facts.
\end{proof}

\vskip 2mm
\subsection{Proof of Theorem \ref{thm:1.2} and \ref{thm:1.3}}~

{\bf The proof of Theorem \ref{thm:1.2}:}
From Lemma \ref{asymptoticb}, we have
\begin{equation}
\begin{aligned}
	\lim\limits_{t\rightarrow0}F(t)=&\left(\frac{n}{k}-\frac{(a-k)(n-k)+k}{k(a+1-k)}\right)\binom{n-1}{k-1}(\frac{n}{k}-2)^{a}
	\rho^{ k(n-k-a-1)}|\mathbb S^{n-1}|C_3\\
	=&\frac{n-2k}{k(a+1-k)}\binom{n-1}{k-1}(\frac{n}{k}-2)^{a}
	\rho^{ k(n-k-a-1)}|\mathbb S^{n-1}|C_3.
\end{aligned}
\end{equation}
This combining with Proposition \ref{prop:3.1} yields to
\begin{equation*}
F(t)\geq\lim\limits_{t\rightarrow0}F(t)=\frac{n-2k}{k(a+1-k)}\binom{n-1}{k-1}(\frac{n}{k}-2)^{a}
\rho^{ k(n-k-a-1)}|\mathbb S^{n-1}|C_3,
\end{equation*}
for any $t\in[-1,0)$. The equality holds if and only if $\Omega$ is a ball.
%%%%%%%%%%%%%%%%%%%%%%%%%%%%%%%%%%

{\bf The proof of Theorem \ref{thm:1.3}:} Let $C_3=0,C_4=1$ in  \eqref{eqn:1.8}, we can obtain \eqref{eqn:1.9}.
Let $C_3=1,C_4=-1, a=n-k-1$ in  \eqref{eqn:1.8}, we can obtain \eqref{eqn:1.10}.  The each equality holds if and only if $\Omega$ is a ball.

\section{The proof of Theorem \ref{thm:1.1}}\label{sec4}

In order to prove Theorem \ref{thm:1.1}, we need to prove some integral identities.
%Firstly, the following lemma help us to deal with the term of boundary integral (cf. \cite{GJY}).
%\begin{lemma}%[Lemma 4 of \cite{GJY}]\label{lem:3.2}
%Let $1\leq k<\frac n2$ and $\Omega\subset\mathbb R^n$ be a smooth bounded convex domain.
%Then, the solution $u$ to \eqref{ae} and \eqref{overdeterminedb} satisfies
%
%\begin{equation}\label{eqn:3.5}
%S_k^{ij}x_i\nu_j|\nabla u^{\ve}|^2=S_k^{ij}u^{\ve}_iu^{\ve}_jx\cdot\nu \ \ {\rm on}\ \partial\Omega,
%\end{equation}
%where $\nu=-\frac{\n u^{\ve}}{|\n u^{\ve}|}$ is an outer normal vector of level set of $u^{\ve}$.
%\end{lemma}

Firstly, we follow Ma-Zhang and Xiao's idea (for detail see \cite{MZ} or \cite{xiao}) to prove the following Lemmas.
\begin{lemma}\label{lem:3.3}
	Let $\Omega\subset\mathbb R^n$ be a bounded, $k-$convex star-shaped domain with boundary of class $ C^{2}$ with $2\leq k<\frac n2$.
	Then, the solution $u$ to \eqref{eqn:1.1} and \eqref{eqn:1.2} satisfies
	\begin{equation}\label{eqn:3.05}
		\begin{aligned}
			(k+1)\int_{\mathbb R^n\setminus\bar\Omega}S_{k-1}|\nabla u|^{2}dx+\int_{\partial\Omega}H_{k-2}|\nabla u|^{k+1} d\sigma-2c^k\int_{\partial \Omega}H_{k-1}d\sigma=0.
		\end{aligned}
	\end{equation}
	
\end{lemma}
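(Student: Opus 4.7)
The plan is to construct a vector field whose divergence equals $(k+1)S_{k-1}|\nabla u|^{2}$, then apply the divergence theorem on $E:=\mathbb{R}^n\setminus\bar\Omega$, reading off the boundary contributions on $\partial\Omega$ via the overdetermined conditions $u=-1$, $|\nabla u|=c$, and checking that the boundary at infinity does not contribute.

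Concretely, set $X^{i}:=|\nabla u|^{2}\,S_{k-1}^{ij}(\nabla^{2}u)\,u_{j}$ and $Y^{i}:=u\,S_{k}^{ij}(\nabla^{2}u)\,u_{j}$. Using the divergence-free property of $S_{m}^{ij}$ (Proposition \ref{prop:2.1}(ii)), the Euler identity $S_{m}^{ij}u_{ij}=m\,S_{m}$, and the equation $S_{k}(\nabla^{2}u)=0$, a direct computation gives $\partial_{i}Y^{i}=S_{k}^{ij}u_{i}u_{j}$. For $X$, expanding $\partial_{i}X^{i}=2u_{l}u_{li}S_{k-1}^{ij}u_{j}+(k-1)|\nabla u|^{2}S_{k-1}$ and applying the Newton recursion \eqref{eqn:2.01} in the symmetric form $u_{li}S_{k-1}^{ij}=S_{k-1}\delta_{lj}-S_{k}^{lj}$ rewrites the mixed term as $2S_{k-1}|\nabla u|^{2}-2S_{k}^{ij}u_{i}u_{j}$. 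Hence $\partial_{i}X^{i}=(k+1)S_{k-1}|\nabla u|^{2}-2S_{k}^{ij}u_{i}u_{j}$, and in the combination $Z^{i}:=X^{i}+2Y^{i}$ the cross terms cancel to give the clean identity $\partial_{i}Z^{i}=(k+1)S_{k-1}|\nabla u|^{2}$.

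Integrating over $E$ and noting that on $\partial\Omega$ the outward unit normal of $E$ is $-\nabla u/c$, I would invoke the pointwise identities \eqref{eqn:2.5} applied to $k$ and to $k-1$, namely $S_{k}^{ij}u_{i}u_{j}=H_{k-1}|\nabla u|^{k+1}$ and $S_{k-1}^{ij}u_{i}u_{j}=H_{k-2}|\nabla u|^{k}$. Substituting $u=-1$ and $|\nabla u|=c$, the $\partial\Omega$ contribution collapses to exactly $-c^{k+1}\int_{\partial\Omega}H_{k-2}+2c^{k}\int_{\partial\Omega}H_{k-1}$; writing $c^{k+1}=|\nabla u|^{k+1}$ on $\partial\Omega$ and rearranging yields \eqref{eqn:3.05}. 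To see that no contribution arises at infinity, I would use the asymptotics $u\sim-\rho|x|^{2-n/k}$, $|\nabla u|\sim|x|^{1-n/k}$, $|\nabla^{2}u|\sim|x|^{-n/k}$ from Lemma \ref{lem:3.1}: a degree count yields $|Z|=O(R^{3-(k+1)n/k})$, so after multiplying by the surface measure $R^{n-1}$ on $\partial B_{R}$ the infinity term is $O(R^{(2k-n)/k})\to 0$, which is precisely where the standing assumption $k<n/2$ is used.

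The main technical obstacle is that $u$ is only $C^{1,1}$, so $\partial_{i}Z^{i}=(k+1)S_{k-1}|\nabla u|^{2}$ is at this stage only formal and the integration by parts must be justified. Following the approximation scheme employed in the proof of Proposition \ref{prop:3.1}, I would carry out the same calculation for the smooth solution $u^{\varepsilon}$ of the perturbed problem \eqref{ae}; the only new contribution is a term $ku^{\varepsilon}f^{\varepsilon}$ entering $\partial_{i}Y^{i,\varepsilon}$, which the uniform $C^{0}$--$C^{2}$ estimates from \cite{MZ,xiao} control by $O(\varepsilon^{2})$ upon integration, so the identity passes to the limit $\varepsilon\to 0$.
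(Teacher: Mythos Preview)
Your proposal is correct and is essentially the paper's own argument: the paper integrates the same two vector fields $|\nabla u|^{2}S_{k-1}^{ij}u_{j}$ and $u\,S_{k}^{ij}u_{j}$ (your $X$ and $Y$) separately and combines the resulting identities, whereas you package them as $Z=X+2Y$ from the outset; the Newton recursion, the boundary identifications via \eqref{eqn:2.5}, the vanishing of the $\partial B_{R}$ terms, and the $u^{\varepsilon}$-approximation to bypass the $C^{1,1}$ regularity issue are all handled identically.
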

\begin{proof}
	We consider the approximate equation \eqref{ae}, then we have
	\begin{equation}\label{overdeterminedb}
		\lim\limits_{\ve\rightarrow 0}|\nabla u^{\ve}|=|\nabla u|=c\ \ \ {\rm on}\ \partial\Omega.
	\end{equation}
	For simplicity, we use $u$ instead of $u^{\ve}$ and $S_k(u_{ij})$ instead of $S_k(u^{\ve}_{ij})$ during the
	proof. We adopt Ma-Zhang's idea to prove it (see section 6 in \cite{MZ} for details).
	
	On the $\partial\Omega$, we have
	\begin{equation*}
		H_{k-2}|\nabla u|^{k+1}=S_{k-1}^{ij}u_i\frac{u_j}{|\n u|}|\nabla u|^2=|\nabla u|^2S_{k-1}^{ij}u_i\nu_j.
	\end{equation*}
	Applying the divergence theorem, we get
	\begin{equation*}
		\begin{aligned}
			-\int_{\partial\Omega}H_{k-2}|\nabla u|^{k+1}d\sigma=&\int_{B_R\setminus\bar\Omega}(S_{k-1}^{ij}u_i|\nabla u|^2)_jdx-\int_{\partial B_R}S_{k-1}^{ij}u_i\nu_{\partial B_R}^j|\nabla u|^2d\sigma\\
			=&-\int_{\partial B_R}S_{k-1}^{ij}u_i\nu_{\partial B_R}^j|\nabla u|^2d\sigma+(k-1)\int_{B_R\setminus\bar\Omega}S_{k-1}|\nabla u|^2dx\\
			&+2\int_{B_R\setminus\bar\Omega}S_{k-1}^{ij}u_iu_{mj}u_mdx.
		\end{aligned}
	\end{equation*}
	Since
	\begin{equation*}
		S_k^{im}u_iu_m=S_{k-1}|\nabla u|^2-S_{k-1}^{ij}u_{mj}u_iu_m,
	\end{equation*}
	we have
	\begin{equation*}
		\begin{aligned}
			-\int_{\partial\Omega}H_{k-2}|\nabla u|^{k+1}d\sigma
			=&-\int_{\partial B_R}S_{k-1}^{ij}u_i\nu_{\partial B_R}^j|\nabla u|^2d\sigma+(k+1)\int_{B_R\setminus\bar\Omega}S_{k-1}|\nabla u|^2dx\\
			&-2\int_{B_R\setminus\bar\Omega}S_{k}^{ij}u_iu_jdx\\
			=&\int_{\partial B_R}S_{k-1}^{ij}u_i\nu_{\partial B_R}^j|\nabla u|^2d\sigma+(k+1)\int_{B_R\setminus\bar\Omega}S_{k-1}|\nabla u|^2dx\\
			&-2\int_{\partial B_R}uS_{k}^{ij}u_i\nu_{\partial B_R}^jd\sigma+2\int_{\partial \Omega}uS_{k}^{ij}u_i\nu^jd\sigma.
		\end{aligned}
	\end{equation*}
	By the $C^0$- $C^2$ estimate of $u^{\ve}$ in $\mathbb R^n\setminus\Omega$ (for details see \cite{MZ}, \cite{xiao}), we have
	\begin{equation*}
		\begin{aligned}
			&\lim_{R\rightarrow\infty}\int_{\partial B_R}S_{k-1}^{ij}u_i\nu_{\partial B_R}^j|\nabla u|^2d\sigma=0,\\
			&\lim_{R\rightarrow\infty}\int_{\partial B_R}uS_{k}^{ij}u_i\nu_{\partial B_R}^jd\sigma=0,
		\end{aligned}
	\end{equation*}
	and $\lim_{\ve\rightarrow0}|\nabla u^{\ve}|=|\nabla u|$ on $\partial\Omega$.
	
	Finally, by taking the limit for  $\ve\rightarrow0$ and $R\rightarrow+\infty$, we obtain the assertion.
\end{proof}

\subsection{Rellich-Pohozaev-type identity}\

We will prove the Rellich-Pohozaev-type identity by using the same method with Lemma \ref{lem:3.3}.
\begin{lemma}\label{lem:3.4}
	Let $u$ be a $C^{1,1}$ solution of problem \eqref{eqn:1.1} and \eqref{eqn:1.2}. Then for $k\ge2$ we have that
	\begin{equation}\label{eqn:3.6}
		\begin{aligned}
			0=&(n-k+1)\left(\int_{\mathbb R^n\setminus\bar\Omega}S_{k-1}|\nabla u|^2dx+\frac{c^{k+1}}{k-1}\int_{\partial\Omega}H_{k-2} d\sigma\right)\\
			&- \frac{2(n-k)c^k}{k}\int_{\partial\Omega}H_{k-1} d\sigma.
		\end{aligned}
	\end{equation}
	
\end{lemma}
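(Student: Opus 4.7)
The strategy is to mimic the divergence-theorem argument used for Lemma~\ref{lem:3.3}, replacing its vector field $S_{k-1}^{ij}u_i|\nabla u|^2$ by a Pohozaev-type one built from the scaling multiplier $\langle x,\nabla u\rangle$, and then to combine the resulting identity with (\ref{eqn:3.05}) through a suitable linear combination. As in the proof of Lemma~\ref{lem:3.3}, the computation is first performed on the approximating exterior problem (\ref{ae}) over bounded annuli $B_R\setminus\bar\Omega$, and the limits $\varepsilon\to0$ and $R\to\infty$ are taken at the end.

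Concretely, I would apply the divergence theorem to
$$X^i = S_k^{ij}\bigl(x_j|\nabla u|^2 - 2 u_j\langle x,\nabla u\rangle + 2 u\,u_j\bigr).$$
Using the divergence-free property $\partial_i S_k^{ij}=0$, Euler's identity $S_k^{ij}u_{ij}=kS_k$, and the symmetry $S_k^{ij}u_{il}=S_k^{il}u_{ij}$ from Proposition~\ref{prop:2.1}(ii), a direct computation shows that the two seemingly different contributions $S_k^{ij}x_j u_l u_{li}$ and $S_k^{ij}u_j x_l u_{li}$ coincide after relabeling (the tensor $T_{\alpha\beta}:=S_k^{i\alpha}u_{\beta i}$ is symmetric, and is contracted against the antisymmetric $x_\alpha u_\beta - u_\alpha x_\beta$), leaving
$$\partial_i X^i = (n-k+1)\,S_{k-1}|\nabla u|^2 + 2k\bigl(u-\langle x,\nabla u\rangle\bigr)S_k.$$
In the approximation the last factor becomes $f^\varepsilon$, and since $f^\varepsilon$ is uniformly bounded and concentrates near the origin while $u^\varepsilon$ and $\langle x,\nabla u^\varepsilon\rangle$ are uniformly bounded on $\mathbb R^n\setminus\Omega$, this remainder contributes $O(\varepsilon^2)$ and drops out in the limit exactly as in the proof of Lemma~\ref{lem:3.3}.

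Next, the boundary trace on $\partial\Omega$ is computed using the overdetermined data. The conditions $|\nabla u|=c$ and $u=-1$ force $u_{\alpha\nu}=0$ in the Darboux frame $(e_1,\dots,e_{n-1},\nu)$, so $\nabla^2 u$ is block-diagonal on $\partial\Omega$; in particular $\nu_i S_k^{ij} = c^{k-1}H_{k-1}\nu^j$. A short calculation then reduces the integrand on $\partial\Omega$ to $c^{k+1}\langle x,\nu\rangle H_{k-1} + 2c^k H_{k-1}$, and Minkowski's identity (\ref{eqn:2.3}) converts $\int_{\partial\Omega}\langle x,\nu\rangle H_{k-1}$ into $\tfrac{n-k+1}{k-1}\int_{\partial\Omega}H_{k-2}$. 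Meanwhile, using the asymptotics of Lemma~\ref{lem:3.1}, each factor in $X$ is of order $r^{3-n-n/k}$ at infinity, so $\oint_{\partial B_R} X\cdot\hat n\, d\sigma = O(R^{2-n/k})\to 0$ thanks to $k<n/2$. These steps together produce the auxiliary Pohozaev identity
$$(n-k+1)\!\int_{\mathbb R^n\setminus\bar\Omega}\!S_{k-1}|\nabla u|^2\,dx = \frac{(n-k+1)c^{k+1}}{k-1}\!\int_{\partial\Omega}\!H_{k-2}\,d\sigma + 2c^k\!\int_{\partial\Omega}\!H_{k-1}\,d\sigma.\qquad(\heartsuit)$$

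Finally, the desired identity (\ref{eqn:3.6}) follows as the linear combination $\tfrac{n-k+1}{k}\cdot(\ref{eqn:3.05}) - \tfrac{1}{k}\cdot(\heartsuit)$, which one verifies produces precisely the coefficient $\tfrac{(n-k+1)c^{k+1}}{k-1}$ on $\int H_{k-2}$ and $\tfrac{2(n-k)c^k}{k}$ on $\int H_{k-1}$, as required. I expect the main technical difficulty to lie in Step~1: the careful cancellation $\int S_k^{ij}x_j u_l u_{li} = \int S_k^{ij}u_j x_l u_{li}$ via Proposition~\ref{prop:2.1}(ii), the sign bookkeeping arising from $\hat n=-\nu$ on $\partial\Omega$, and the uniform-in-$\varepsilon$ control of the approximation remainders — all of which follow the template already laid out in the proof of Lemma~\ref{lem:3.3}.
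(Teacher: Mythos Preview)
Your argument is correct and yields the desired identity, but it follows a genuinely different route from the paper. The paper does \emph{not} combine a Pohozaev identity with Lemma~\ref{lem:3.3}; instead it integrates the single divergence relation
\[
kf^{\varepsilon}u=\tfrac12\bigl(S_k^{ij}u_{il}u(|x|^2)_l\bigr)_j-\tfrac12\bigl(S_k^{il}|\nabla u|^2x_l\bigr)_i+\tfrac{n-k+1}{2}S_{k-1}|\nabla u|^2+O(\varepsilon^2),
\]
whose boundary trace on $\partial\Omega$ involves $S_k^{ij}u_{il}x_l\nu_j$. Via the Newton identity $S_k^{ij}u_{il}=S_k\delta_{jl}-S_{k+1}^{jl}$ this produces \emph{both} $H_{k-1}\langle x,\nu\rangle$ and $H_k\langle x,\nu\rangle$ on $\partial\Omega$; two applications of the Minkowski formula then give \eqref{eqn:3.6} directly, with no recourse to \eqref{eqn:3.05}.

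Your vector field $X^i=S_k^{ij}(x_j|\nabla u|^2-2u_j\langle x,\nabla u\rangle+2uu_j)$ contains only first derivatives of $u$ outside of $S_k^{ij}$, so after using $\nu_iS_k^{ij}\to c^{k-1}H_{k-1}\nu^j$ (which relies on $u_{n\alpha}\to0$, handled by the same dominated-convergence step as in the paper) the boundary sees only $H_{k-1}$ and $H_{k-1}\langle x,\nu\rangle$. This makes your boundary computation cleaner---you avoid the frame analysis of $S_{k+1}^{ij}x_i\nu_j$ that the paper carries out---at the cost of needing the extra input \eqref{eqn:3.05} and the final linear combination $\tfrac{n-k+1}{k}\cdot\eqref{eqn:3.05}-\tfrac{1}{k}\cdot(\heartsuit)$, which I checked does indeed reproduce \eqref{eqn:3.6}. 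The symmetry cancellation $S_k^{ij}x_ju_lu_{li}=S_k^{ij}u_jx_lu_{li}$ via Proposition~\ref{prop:2.1}(ii) is exactly right, and your decay estimate $O(R^{2-n/k})$ at infinity is correct.
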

\begin{proof}
	Since $S_{k}(\nabla^2 u)=f^{\ve}$, we have
	$$
	S_{k}^{ij}u_{ijl}=O(\ve^2)(|x|^2+\ve^2)^{-\frac n2-2}x_l\ \ {\rm for }\ l=1,\ldots,n.
	$$
	Moreover, by using Proposition \ref{prop:2.1} (ii), we have
	\begin{equation}\label{eqn:3.06}
		\begin{aligned}
			kf^{\ve}u=&kS_k(\nabla^2 u)u=S_{k}^{ij}u_{ij}u=\frac 12S_{k}^{ij}u_{il}u(|x|^2)_{lj}\\
			=&\frac12\left(S_{k}^{ij}u_{il}u(|x|^2)_{l}\right)_j-\frac12S_{k}^{ij}u_{ilj}u(|x|^2)_{l}-\frac12S_{k}^{ij}u_{il}u_j(|x|^2)_{l}\\
			=&\frac12\left(S_{k}^{ij}u_{il}u(|x|^2)_{l}\right)_j-\frac12S_{k}^{il}u_{ij}u_j(|x|^2)_{l}+O(\ve^2)u(|x|^2+\ve^2)^{-\frac n2-1}\\
			=&\frac12\left(S_{k}^{ij}u_{il}u(|x|^2)_{l}\right)_j-\frac12\left(S_{k}^{il}|\nabla u|^2x_l\right)_i+\frac{(n-k+1)}2S_{k-1}|\nabla u|^2\\
			&+O(\ve^2)u(|x|^2+\ve^2)^{-\frac n2-1}.
		\end{aligned}
	\end{equation}
	By directing computations, we get
	\begin{equation}\label{eqn:3.99}
		\begin{aligned}
			O(\ve^2)(r_0^{-\frac nk}-R^{-\frac nk})=&\int_{B_R\setminus\Omega}\left(S_{k}^{ij}u_{il}u(|x|^2)_{l}\right)_j-\left(S_{k}^{il}|\nabla u|^2x_l\right)_idx\\
			&+\int_{B_R\setminus\Omega}(n-k+1)S_{k-1}|\nabla u|^2dx\\
			=&\int_{\partial B_R}S_{k}^{ij}u_{il}u(|x|^2)_{l}\nu^j_{\partial B_R}d\sigma-\int_{\partial \Omega}S_{k}^{ij}u_{il}u(|x|^2)_{l}\nu^jd\sigma\\
			&-\int_{\partial B_R}S_{k}^{il}x_l\nu^i_{\partial B_R}|\nabla u|^2d\sigma+\int_{\partial \Omega}S_{k}^{il}x_l\nu^i|\nabla u|^2d\sigma\\
			&+\int_{B_R\setminus\Omega}(n-k+1)S_{k-1}|\nabla u|^2d\sigma.
		\end{aligned}
	\end{equation}
	Here, we have used $C^0$ estimate of $u^{\ve}$  such that
	$$
	\int_{B_R\setminus\Omega}u(|x|^2+\ve^2)^{-\frac n2-1}=O(\ve^2)(r_0^{-\frac nk}-R^{-\frac nk}).
	$$

	Then, by using Proposition \ref{prop:2.1} (ii), we have
	\begin{equation}\label{eqn:3.07}
		\begin{aligned}
			\int_{\partial \Omega}S_{k}^{ij}u_{il}u(|x|^2)_{l}\nu^j d\sigma
			&=-2\int_{\partial \Omega}S_{k}^{ij}u_{il}x_{l}\nu^jd\sigma\\
			&=2\int_{\partial \Omega}S_{k+1}^{jl}x_{l}\nu^jd\sigma-2\int_{\partial\Omega}f^{\ve}\langle x,\,\nu\rangle.
		\end{aligned}
	\end{equation}
	At any point $p$ satisfying $\nabla u\not=0$, we choose $\{e_{\alpha}\}_{\alpha=1}^{n-1} $such that
	$$
	u_{\alpha\beta}=\lambda_{\alpha}\delta_{\alpha\beta} \ \ {\rm and }\ e_n=\frac{\nabla u}{|\nabla u|}.
	$$
	On $\partial\Omega$, we have
	\begin{equation}\label{eqn:3.77}
		\begin{aligned}
			S_{k+1}^{ij}x_i\nu_j=&S_{k+1}^{in}x_i=S_{k+1}^{nn}x\cdot\nu +S_{k+1}^{\alpha n}x\cdot e_{\alpha}\\
			=&S_{k+1}^{ij}u_iu_jx\cdot\nu|\nabla u|^{-2}-S_{k-1}(\lambda|\alpha)x\cdot e_{\alpha}u_{n\alpha}\\
			=&H_{k}|\nabla u|^{k}x\cdot\nu-S_{k-1}(\lambda|\alpha)x\cdot e_{\alpha}u_{n\alpha}+O(\ve^2).
		\end{aligned}
	\end{equation}
	Similarly,
	\begin{equation}\label{eqn:3.11}
		\begin{aligned}
			S_{k}^{ij}x_i\nu_j|\nabla u|^2
			=&S_{k}^{ij}u_iu_jx\cdot\nu-S_{k-2}(\lambda|\alpha)x\cdot e_{\alpha}u_{n\alpha}|\nabla u|^2\\
			=&H_{k-1}|\nabla u|^{k+1}x\cdot\nu-S_{k-2}(\lambda|\alpha)x\cdot e_{\alpha}u_{n\alpha}|\nabla u|^2.
		\end{aligned}
	\end{equation}
	
	By using the $C^0$-$C^2$ estimate of $u^{\ve}$, we know that $S_{k-2}(\lambda|\alpha)x\cdot e_{\alpha}u_{n\alpha}|\nabla u|^2$ and $S_{k-1}(\lambda|\alpha)x\cdot e_{\alpha}u_{n\alpha}|\nabla u|^2$ is  bounded. By  Lebesgue dominated convergence theorem, we have
	\begin{equation*}
		\begin{aligned}
			&\lim_{\ve\rightarrow0}\int_{\partial\Omega}S_{k-1}(\lambda|\alpha)x\cdot e_{\alpha}u_{n\alpha}|\nabla u|^2=0,
			&\lim_{\ve\rightarrow0}\int_{\partial\Omega}S_{k-2}(\lambda|\alpha)x\cdot e_{\alpha}u_{n\alpha}|\nabla u|^2=0.
		\end{aligned}
	\end{equation*}
	Here, we used $\lim\limits_{\ve\rightarrow0}u_{n\alpha}^2=0$ on $\partial\Omega$.
	
	We can also see that
	\begin{equation*}
		\begin{aligned}
			&\lim_{R\rightarrow\infty}\int_{\partial B_R}S_{k}^{ij}u_{il}u(|x|^2)_{l}\nu^j_{\partial B_R}d\sigma=0,\ \ \
			\lim_{R\rightarrow\infty}\int_{\partial B_R}S_{k}^{il}x_l\nu^i_{\partial B_R}|\nabla u|^2d\sigma=0.
		\end{aligned}
	\end{equation*}
	
	Substituting \eqref{eqn:3.77}-\eqref{eqn:3.11} into \eqref{eqn:3.99}, then taking the limits for $
	\ve\rightarrow0$  and  $R\rightarrow+\infty$, we obtain that
	$$
	0=c^{k+1}\int_{\partial \Omega}H_{k-1}x\cdot\nu d\sigma
	+\int_{\mathbb R^n\setminus\bar\Omega}(n-k+1)S_{k-1}|\nabla u|^2d\sigma-2c^k\int_{\partial \Omega}H_{k}x\cdot\nu d\sigma.\\
	$$
	This combining with  Minkowskian integral formulae yields to the assertion.
	%\begin{equation*}
	%\begin{aligned}
	%-\int_{\partial\Omega}F^p(\nabla u)\langle x,\, \nu\rangle=
	%&\int_{\mathbb R^n\setminus\bar\Omega}{\rm div}\big(xF^p(\nabla u)\big)dx\\
	%=&\int_{\mathbb R^n\setminus\bar\Omega}nF^p(\nabla u)+x_i\frac{\partial}{\partial x_i}(F^p(\nabla u))dx\\
	%=&\int_{\mathbb R^n\setminus\bar\Omega}nF^p(\nabla u)dx+p\int_{\mathbb R^n\setminus\bar\Omega}
	%x_i\partial_{x_j}(F^{p-1}F_ju_i)\\
	%&-p\int_{\mathbb R^n\setminus\bar\Omega}x_iu_i\partial_{x_j}(F^{p-1}F_j)dx\\
	%=&\int_{\mathbb R^n\setminus\bar\Omega}nF^p(\nabla u)dx+p\int_{\partial\Omega}
	%x_iu_i F^{p-1}\frac{F_j u_j}{|\nabla u|}d\sigma\\
	%&-p\int_{\mathbb R^n\setminus\bar\Omega}F^{p-1}F_ju_jdx.
	%\end{aligned}
	%\end{equation*}
	%
\end{proof}

{\bf The proof of Theorem \ref{thm:1.1}:}
It follows from Lemma \ref{lem:3.3} and \ref{lem:3.4} that when $k\ge 2$,
\begin{equation}\label{eqn:4.1}
	c=\frac{n-2k}k\frac{k-1}{n-k+1}\frac{\int_{\partial\O}H_{k-1}}{\int_{\partial\O}H_{k-2}}.
\end{equation}

On the other hand, by \eqref{eqn:1.9} of Theorem \ref{thm:1.3} and overdetermined condition, we obtain
\begin{equation}\label{eqn:4.2}
	\frac{n-k}{n-2k}c\leq\frac{\int_{\partial\O}H_{k}}{\int_{\partial\O}H_{k-1}}.
\end{equation}

From \eqref{eqn:4.1} and \eqref{eqn:4.2}, we obtain
\begin{equation*}
	(n-k)(k-1)\left(\int_{\partial\O}H_{k-1}\right)^2\leq(n-k+1)k\int_{\partial\O}H_{k}\int_{\partial\O}H_{k-2}.
\end{equation*}

This combining with special Aleksandrov-Fenchel inequalities (see Notes for Section 7.4 of \cite{S})

\begin{equation*}
	(n-k)(k-1)\left(\int_{\partial\O}H_{k-1}\right)^2\geq(n-k+1)k\int_{\partial\O}H_{k}\int_{\partial\O}H_{k-2}
\end{equation*}
yields to
\begin{equation*}
	(n-k)(k-1)\left(\int_{\partial\O}H_{k-1}\right)^2=(n-k+1)k\int_{\partial\O}H_{k}\int_{\partial\O}H_{k-2},
\end{equation*}
which implies that $\Omega$ is ball.

If $k=1$, it follows from Brandolini-Nitsch-Salani's result \cite{BNS} that $\Omega$ is a ball. We should point out that they have used the Minkowski-type inequality
\begin{equation}\label{eqn:4.11} 
	|\Omega|\int_{\partial\Omega}H\leq\frac{n-1}{n}|\partial\Omega|^2
\end{equation}
by using method of convex geometry.  Qiu-Xia  (\cite{QX}) give an  alternative approach  to prove this Minkowski-type inequality.

This complete the proof of Theorem \ref{thm:1.1}.
\begin{remark}
	When $k=1$, inequality \eqref{eqn:4.11} ( or \eqref{eqn:4.2}) holds without convex condition (see \cite{XY}). 
\end{remark}

\begin{appendices}

\section{Solution of the ODEs system}\label{secA1}
In this appendix, we solve the ordinary differential equations for $t\in[-1, 0)$:
\begin{equation*}
	\begin{cases}
		C_2'(t)+(a-\frac{k(n-k-1)}{n-k})(\frac{n-k}{(n-2k)t})^2C_1(t)= 0, \\
		C_1'(t)-(a+1-k)C_2(t)+2\frac{n-k}{(n-2k)t}(a-\frac{k(n-k-1)}{n-k})C_1(t)= 0.
	\end{cases}
\end{equation*}
We can solve the solution of this ODEs system as folllows
\begin{equation*}
	\begin{aligned}
		C_1(t)=&(-t)^{-\frac{(a-k)(n-k)+k}{n-2k}}C_3+(-t)^{1-\frac{(a-k)(n-k)+k}{n-2k}}C_4,\\  C_2(t)=&-\frac{(a-k)(n-k)+k}{(n-2k)(a+1-k)}C_3(-t)^{-\frac{(a-k+1)(n-k)}{n-2k}}\\
		&-\frac{n-k}{n-2k}C_4(-t)^{1-\frac{(a-k+1)(n-k)}{n-2k}},
	\end{aligned}
\end{equation*}
where $C_3,C_4\in \mathbb R$.

%%=============================================%%
%% For submissions to Nature Portfolio Journals %%
%% please use the heading ``Extended Data''.   %%
%%=============================================%%

%%=============================================================%%
%% Sample for another appendix section			       %%
%%=============================================================%%

%% \section{Example of another appendix section}\label{secA2}%
%% Appendices may be used for helpful, supporting or essential material that would otherwise
%% clutter, break up or be distracting to the text. Appendices can consist of sections, figures,
%% tables and equations etc.
\begin{lemma}\label{lem:A1}
Let $u^{\ve}$ be a $k$-admissible solution of \eqref{ae}. Then  we have, for $C_1(u^{\ve})\geq0$,
\begin{equation*}
\begin{aligned}
\int_{t_1<u^{\ve}<t_2}\left(S_k^{ij}u^{\ve}_{im}u^{\ve}_{mj}-\frac{n}{n-k}S_k^{ij}D_i|Du^{\ve}|D_j|Du^{\ve}|\right)
C_1(u^{\ve})|Du^{\ve}|^{a-k-1}\\
\geq- O(\ve^2)\left(|t_2|^{\frac{2k}{n-2k}}-|t_1|^{\frac{2k}{n-2k}}\right).
\end{aligned}
\end{equation*}
\end{lemma}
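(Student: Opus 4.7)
The plan is to establish the integral inequality by combining a pointwise rewrite of the integrand with integration by parts that exploits both the divergence--free property of $S_k^{ij}$ and the approximating equation $S_k(\n^2 u^\ve) = f^\ve = O(\ve^2)$, so that the potentially indefinite cross terms close up into a nonnegative quantity plus an $O(\ve^2)$ error of the stated weighted form.

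\textbf{Step 1: Pointwise decomposition.} At any point where $|\n u^\ve|\neq 0$ I would choose an orthonormal frame with $e_n = \n u^\ve/|\n u^\ve|$, so that $\p_i|\n u^\ve| = u^\ve_{ni}$. Splitting the sum in $m$ between $m=n$ and $m=\alpha\in\{1,\dots,n-1\}$ yields the identity
\begin{equation*}
S_k^{ij}u^\ve_{im}u^\ve_{mj} - \frac{n}{n-k}\,S_k^{ij}\p_i|\n u^\ve|\,\p_j|\n u^\ve| = \sum_{\alpha=1}^{n-1}S_k^{ij}u^\ve_{i\alpha}u^\ve_{\alpha j} - \frac{k}{n-k}\sum_{i,j}S_k^{ij}u^\ve_{in}u^\ve_{nj}.
\end{equation*}
The right--hand side isolates the tangential second--derivative content, and the constant $n/(n-k)$ is the sharp Newton--Maclaurin balance one expects when $S_k(\n^2 u^\ve)$ is (nearly) zero.

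\textbf{Step 2: Use the PDE and a Newton--Maclaurin comparison.} Since $\lambda(\n^2 u^\ve)\in\Gamma_k^+$, Proposition \ref{prop:2.1}(i) gives the concavity of $(S_k)^{1/k}$, equivalent to a Newton--Maclaurin inequality comparing $S_{k+1}(\n^2 u^\ve)$ with a ratio of $S_k$'s. Combining this with the trace identity $S_k^{ij}u^\ve_{im}u^\ve_{mj} = S_1 S_k - (k+1)S_{k+1}$ from Proposition \ref{prop:2.01} and substituting $S_k = f^\ve = O(\ve^2)$, the pointwise quantity from Step 1 is bounded below by the right object up to an $O(\ve^2)$ remainder whose size is controlled by the uniform $C^0$--$C^2$ estimates of $u^\ve$ (Ma--Zhang \cite{MZ}, Xiao \cite{xiao}).

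\textbf{Step 3: Integration by parts and boundary control.} Invoking the divergence--free property $\p_j S_k^{ij} = 0$ from Proposition \ref{prop:2.1}(ii), I would integrate by parts the third--order term arising from differentiating $S_k(\n^2 u^\ve) = f^\ve$, namely $\int C_1(u^\ve)|\n u^\ve|^{a-k-1} S_k^{ij}u^\ve_m u^\ve_{ijm}$. This produces boundary integrals on $\{u^\ve = t_1\}$, on $\{u^\ve = t_2\}$, on $\p\Omega$, and, through an exhaustion by $B_R$, at infinity, together with interior quadratic Hessian terms that exactly complete the bulk identity of Step 1. The hypothesis $C_1(u^\ve)\geq 0$ fixes the sign in the final estimate. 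Using the asymptotics from Lemma \ref{lem:3.1} and the explicit form $f^\ve = c_{n,k}\ve^2(|x|^2+\ve^2)^{-n/2-1}$, the contributions at $\p B_R$ vanish as $R\to\infty$, while the remaining terms on $\{u^\ve = t_1\}$ and $\{u^\ve = t_2\}$ are of order $O(\ve^2)\bigl(|t_2|^{2k/(n-2k)} - |t_1|^{2k/(n-2k)}\bigr)$, exactly the weighted scaling that the lemma demands.

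\textbf{Main obstacle.} The quantity on the right--hand side in Step 1 is not a manifest sum of squares, so no purely local pointwise argument suffices: one truly needs the PDE. The crux is to find the precise integration--by--parts move that uses $S_k(\n^2 u^\ve) = f^\ve$ once so as to absorb the off--sign cross term $\frac{k}{n-k}S_k^{ij}u^\ve_{in}u^\ve_{nj}$ into a divergence, leaving a nonnegative bulk plus an $O(\ve^2)$ tail. Matching the weighted form of the error requires using the explicit asymptotic decay rate of $u^\ve$ from Lemma \ref{lem:3.1} on the level--set boundary integrals.
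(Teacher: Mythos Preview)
Your Steps~1--2 head in the right direction but are too vague, and Step~3 together with your ``Main obstacle'' misidentify where the work lies. The paper's proof of this lemma is \emph{entirely pointwise}: no integration by parts, no third--order terms, no boundary analysis at $\p\Omega$ or at infinity. The integrand involves only $u^\ve$, $\n u^\ve$, and $\n^2 u^\ve$; there is nothing to differentiate, and the domain $\{t_1<u^\ve<t_2\}$ is bounded between two regular level sets, so no exhaustion by $B_R$ is relevant. The integration--by--parts machinery you describe in Step~3 (handling $S_k^{ij}u^\ve_m u^\ve_{ijm}$, boundary terms, etc.) belongs to the proof of Proposition~\ref{prop:3.1}, not to this lemma; you have conflated the two.

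What actually closes the estimate is a purely algebraic computation in the adapted frame. After diagonalising the tangential block $u_{\alpha\beta}=\lambda_\alpha\delta_{\alpha\beta}$, one writes each of $S_k^{ij}$, $S_k(\n^2 u^\ve)=f^\ve$, and $S_{k+1}(\n^2 u^\ve)$ explicitly in terms of the $(n-1)$ tangential eigenvalues $\lambda$, the scalar $u_{nn}$, and the cross terms $u_{\alpha n}$. Substituting these into $-S_k^{ij}u_{im}u_{mj}+\frac{n}{n-k}S_k^{ij}u_{in}u_{jn}$ and eliminating $u_{nn}$ via the equation $f^\ve=S_{k-1}(\lambda)u_{nn}+S_k(\lambda)-\sum_\alpha S_{k-2}(\lambda|\alpha)u_{\alpha n}^2$, the expression reduces to $(k+1)S_{k+1}(\lambda)-\frac{k(n-k-1)}{n-k}\frac{S_k(\lambda)^2}{S_{k-1}(\lambda)}$ plus a second term of the form $\sum_\alpha\bigl(\frac{S_k(\lambda)S_{k-2}(\lambda|\alpha)}{S_{k-1}(\lambda)}-S_{k-1}(\lambda|\alpha)\bigr)u_{\alpha n}^2$, plus $O(f^\ve)$ remainders. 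Both main terms are nonpositive by Newton--Maclaurin inequalities applied to the $(n-1)$--vector $\lambda$ (not to $\n^2 u^\ve$ in $\R^n$ as you wrote). The final integral bound then follows simply from the uniform $C^0$--$C^2$ estimates and the explicit decay of $f^\ve$. Your Step~2 gestures at Newton--Maclaurin but does not isolate either of these two inequalities, and your claim that ``no purely local pointwise argument suffices'' is precisely backwards.
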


\begin{proof}
For simplicity, we use $u$ instead of $u^{\ve}$ and $S_k^{ij}$ instead of $S_k^{ij}(\nabla^{2}u^{\ve})$ during the
	proof.

Let $1\leq\alpha,\beta,\ldots\leq n-1$ and $1\leq 1,j,m,\ldots\leq n$.
Choosing $\{e_\alpha\}$ such that $u_{\alpha\beta}=\lambda_{\alpha}\delta_{\alpha\beta}$ and $e_n=\frac{\nabla u}{|\n u|}$.
Then we have
\begin{equation}\label{eqn:AA1}
\begin{aligned}
  S_k^{\alpha\alpha}
  =&S_{k-2}(\lambda|\alpha)u_{nn}+S_{k-1}(\lambda|\alpha)-\sum_{\beta\not=\alpha}S_{k-3}(\lambda|\alpha,\beta)u_{\beta n}^2,
  \end{aligned}
\end{equation}
and
\begin{equation}\label{eqn:AA2}
  \begin{aligned}
  S_k^{\alpha n}
=&-S_{k-2}(\lambda|\alpha)u_{\alpha n}.
  \end{aligned}
\end{equation}
We also can see that $\alpha\not=\beta$
\begin{equation}\label{eqn:AA3}
  \begin{aligned}
  S_k^{\alpha \beta}
=&0.
  \end{aligned}
\end{equation}
Noticing that
\begin{equation}\label{eqn:AA4}
  kS_k(\lambda)=\sum_{\alpha}S_{k-1}(\lambda|\alpha)\lambda_{\alpha}.
\end{equation}

It follows from that $S_{k}(D^2u^{\ve})=f^{\ve}$ that
\begin{equation}\label{eqn:AA5}
\begin{aligned}
f^{\ve}
=&S_{k-1}(\lambda)u_{nn}+S_k(\lambda)-S_{k-2}(\lambda|\alpha)u_{\alpha n}^2
\end{aligned}
\end{equation}

Similarly, we have
\begin{equation}\label{eqn:AA6}
\begin{aligned}
S_{k+1}(D^2u^{\ve})
=&S_{k}(\lambda)u_{nn}+S_{k+1}(\lambda)-S_{k-1}(\lambda|\alpha)u_{\alpha n}^2
\end{aligned}
\end{equation}
Next, we have the following computation
\begin{equation}\label{eqn:AA7}
\begin{aligned}
S_k^{ij}u_{in}u_{jn}=&S_{k}^{\alpha\alpha}u_{\alpha n}^2+2S_{k}^{\alpha n}u_{\alpha n}u_{nn}+S_k^{nn}u_{nn}^2\\
=&\Big(\big(S_{k-2}(\lambda|\alpha)u_{nn}+S_{k-1}(\lambda|\alpha)-\sum_{\beta\not=\alpha}
S_{k-3}(\lambda|\alpha,\beta)u_{\beta n}^2\big)u_{\alpha n}^2\\
&-2S_{k-2}(\lambda|\alpha)u_{\alpha n}^2u_{nn}+S_{k-1}(\lambda)u_{nn}^2\Big)\\
=&S_{k-1}(\lambda)u_{nn}^2-S_{k-2}(\lambda|\alpha)u_{\alpha n}^2u_{nn}+(S_{k-1}(\lambda|\alpha)\\
&-\sum_{\beta\not=\alpha}S_{k-3}(\lambda|\alpha,\beta)u_{\beta n}^2\big)u_{\alpha n}^2\\
\leq &(f^{\ve}-S_k(\lambda))u_{nn}+S_{k-1}(\lambda|\alpha)u_{\alpha n}^2
\end{aligned}
\end{equation}

By using \eqref{eqn:AA5}-\eqref{eqn:AA7}, we have
\begin{equation}\label{eqn:AA8}
\begin{aligned}
&-S_{k}^{ij}u_{ik}u_{kj}+\frac{n}{n-k}S_{k}^{ij}u_{in}u_{jn}\\
=&(k+1)S_{k+1}+\frac{n}{n-k}S_{k}^{ij}u_{in}u_{jn}\\
\leq&(k+1)S_{k+1}(\lambda)+\left((k+1)-\frac{n}{n-k}\right)S_k(\lambda)u_{nn}-(k+1-\frac{n}{n-k})S_{k-1}(\lambda|\alpha)u_{\alpha n}^2\\
&+\frac{n}{n-k}f^{\ve}u_{nn}\\
=&(k+1)S_{k+1}(\lambda)-\left(\frac{k(n-k-1)}{n-k}\right)S_k(\lambda)\frac{S_k(\lambda)-S_{k-2}(\lambda|\alpha)u_{\alpha n}^2}{S_{k-1}(\lambda)}\\
&-\left(\frac{k(n-k-1)}{n-k}\right)S_{k-1}(\lambda|\alpha)u_{\alpha n}^2+\frac{n}{n-k}f^{\ve}u_{nn}+\frac{k(n-k-1)}{n-k}\frac{S_k(\lambda)}{S_{k-1}(\lambda)}f^{\ve}\\
=&(k+1)S_{k+1}(\lambda)-\left(\frac{k(n-k-1)}{n-k}\right)\frac{S_k^2(\lambda)}{S_{k-1}(\lambda)}
+\frac{k(n-k-1)}{n-k}\frac{S_k(\lambda)}{S_{k-1}(\lambda)}f^{\ve}\\
&+\left(\frac{k(n-k-1)}{n-k}\right)\left(S_k(\lambda)\frac{S_{k-2}(\lambda|\alpha)u_{\alpha n}^2}{S_{k-1}(\lambda)}-S_{k-1}(\lambda|\alpha)u_{\alpha n}^2\right)+\frac{n}{n-k}f^{\ve}u_{nn}\\
\leq&\frac{n}{n-k}f^{\ve}|D^2u|+\frac{k(n-k-1)}{n-k}H_{k-1}^{\frac1{k-1}}f^{\ve}|Du|.
\end{aligned}
\end{equation}
By using the uniform $C^0-C^2$ estimates of $u^{\ve}$ (see \cite{MZ}, \cite{xiao}), we have
\begin{equation*}
\begin{aligned}
\left(\frac{n}{n-k}f^{\ve}|D^2u|+\frac{k(n-k-1)}{n-k}H_1|Du|f^{\ve}\right)C_1(u)|Du|^{a-k-1}\leq O(\ve^2)|x|^{-n-\frac nk}.
\end{aligned}
\end{equation*}
Thus \begin{equation*}
\begin{aligned}
\int_{t_1<u^{\ve}<t_2}\left(S_k^{ij}u^{\ve}_{im}u^{\ve}_{mj}-\frac{n}{n-k}S_k^{ij}D_i|Du^{\ve}|D_j|Du^{\ve}|\right)
C_1(u^{\ve})|Du^{\ve}|^{a-k-1}\\
\geq- O(\ve^2)\left(|t_2|^{\frac{2k}{n-2k}}-|t_1|^{\frac{2k}{n-2k}}\right).
\end{aligned}
\end{equation*}
\end{proof}

\end{appendices}

\bmhead{Acknowledgements}

The authors would like to thank  Professor  Hui Ma, Zhizhang Wang, Chao Xia and Doctor Mingxuan Yang for their helpful conversations on this work.

\bmhead{Funding}

Jiabin Yin was supported by the NSF of China (Grant Number 12201138) and Mathematics Tianyuan fund project (Grant No. 12226350). Xingjian Zhou was supported by the China Scholarship Council (No. 202406310160).

%%%%%%%%%%%%%%%%%%%%%%%%%%%%%%%%%%%%%%%%%%%%%%%%%%%%%%%%%%%%%%%%%%%%%%%%%%%%

\end{document}